\documentclass[12pt]{article}
\usepackage{graphicx}
\usepackage{color}
\usepackage{multirow,multicol}
\usepackage{epstopdf}
\usepackage{amsmath,amssymb,amsthm}





\newtheorem{theorem}{Theorem}[section]
\newtheorem{lemma}[theorem]{Lemma}
\newtheorem{corollary}[theorem]{Corollary}
\newtheorem{proposition}[theorem]{Proposition}
\newtheorem{definition}[theorem]{Definition}

\newtheorem{remark}[theorem]{Remark}

\numberwithin{equation}{section}

\newcommand{\diag}{\mathrm{diag}\,}

 \newcommand{\vp}{\varphi}
\newcommand{\D}{\displaystyle}

\newcommand{\ig}{\int_\Gamma}

\newcommand{\suml}{\sum\limits}

\newcommand{\liml}{\lim \limits}

\newcommand{\im}{\mathrm{im}\,}
\newcommand{\ve}{\varepsilon}

\newcommand{\ima}{\textrm{Im}\,}



\newcommand{\bR}{{\bf R}}

\newcommand{\cA}{\mathcal{A}}

\newcommand{\cC}{\mathcal{C}}

\newcommand{\cG}{\mathcal{G}}

\newcommand{\cJ}{\mathcal{J}}
\newcommand{\cK}{\mathcal{K}}
\newcommand{\cL}{\mathcal{L}}
\newcommand{\cM}{\mathcal{M}}
\newcommand{\cN}{\mathcal{N}}

\newcommand{\cT}{\mathcal{T}}

\newcommand{\sC}{{\mathbb C}}

\newcommand{\sM}{{\mathbb M}}
\newcommand{\sN}{{\mathbb N}}

\newcommand{\sR}{{\mathbb R}}

\newcommand{\sZ}{{\mathbb Z}}

\begin{document}

\begin{center}
{\Large\textbf{The stability of the Nystr\"om method for\\ double
layer potential equations}\footnote{This work is partially supported
by the Universiti Brunei Darussalam, Grant UBD/GSR/S\&T/19}}

\vspace{5mm}

\textbf{Victor D. Didenko\footnote{E-mail: diviol@gmail.com} and Anh
My Vu\footnote{E-mail: anhmy7284@gmail.com}}

\vspace{2mm}

Universiti Brunei Darussalam, Bandar Seri Begawan, BE1410  Brunei;
diviol@gmail.com

 \end{center}



\begin{abstract}
The stability of the Nystr{\"o}m method for the double layer
potential equation on simple closed piecewise smooth contours is
studied. Necessary and sufficient conditions of the stability of the
method are established. It is shown that the method under
consideration is stable if and only if certain operators associated
with the opening angles of the corner points are invertible.
Numerical experiments show that there are opening angles which cause
instability of the method.
\end{abstract}

 \textbf{Key Words:}Double layer potential equation, Nystr\"om method,
stability, critical angles

\textbf{2010 Mathematics Subject Classification:} Primary 65R20,
65N38; Secondary 74S15, 45L05



\date{}



  \section{Introduction\label{s1}}
Boundary integral equations are widely used in approximate solution
of partial differential equations. For example, consider the
Dirichlet problem for the Laplace equation
  \begin{equation}\label{eqn1}
  \begin{aligned}
\Delta u(x,y)=&\,0,\quad (x,y)\in D\\
 u(x,y) =&f(x,y), \quad (x,y)\in \Gamma
\end{aligned}
 \end{equation}
where $D$ is a simply connected domain of $\sR^2$ and $\Gamma$ is
the boundary of $D$. It is well known that the solution of the
problem \eqref{eqn1} can be reduced to the solution of a double
layer potential equation.

On the contour $\Gamma$ consider the double layer potential equation
 \begin{equation}\label{eqn2}
(Ax)(t) = x(t) + \frac{1}{\pi} \ig x(\tau) \frac{d}{d n_{\tau }}\log
|t-\tau |d \, \Gamma_{\tau } + (Tx)(t) = f(t), \quad t=x+iy \in
\Gamma.
\end{equation}
where $n_\tau$ refers to the outer normal to $\Gamma$ at the point
$\tau\in \Gamma$ and $T$ is a compact operator in the corresponding
space. Note that concrete form of $T$ depends on the boundary value
problem considered. In particular, $T=0$ if one uses the following
representation
 $$
 u(z) :=\frac{1}{\pi} \ig x(\tau) \frac{d}{d n_{\tau }}\log |z-\tau
|\, d \Gamma_{\tau }, \quad z\in D,
 $$
in order to reduce the boundary value problem \eqref{eqn1} to the
integral equation \eqref{eqn2} \cite{Atkinson1997}. The operator
\eqref{eqn2} has been intensively studied and there is a vast
literature concerning the properties of these operators considered
on various contours and in various functional spaces \cite{Co:1988,
Kre:2014, RSS:2011, Ver:1984}. In particular, it is known that if
$\Gamma$ is a smooth curve, then the double layer potential operator
 \begin{equation}\label{eqn3}
 V_\Gamma x(t):= \frac{1}{\pi} \ig x(\tau) \frac{d}{d n_{\tau }}\log |t-\tau
|d \, \Gamma_{\tau }
 \end{equation}
is compact on $L^p$ spaces, and this fact essentially simplifies the
stability investigation for many approximation methods under
consideration. In fact, for smooth curves if the operator $A$ of
\eqref{eqn2} is invertible, then the corresponding approximation
method is stable provided that there is a "good" convergence of the
approximation operators to the operator $A$. On the other hand, if
$\Gamma$ possesses corner points, the operator \eqref{eqn3} is not
compact anymore. Therefore, now stability depends not only on the
invertibility of the operator $A$ of \eqref{eqn2} and on convergence
properties but also on additional parameters connected with the
method itself and with the opening angles of the corner points.
These features of approximation methods for equation \eqref{eqn2}
considered on piecewise smooth contours has been mentioned in a
number of works \cite{Atkinson1997, Chandler1987, DRS:1995,
Kre:1990}. In the present paper we consider the Nystr{\"o}m method
based on Gauss-Legendre quadrature formulas. Various modifications
of this method have been discussed in literature. It turns out that
these methods demonstrate a good convergence even in situation where
$\Gamma$ possesses a large sets of corner points \cite{Bre:2012a,
Bre:2010a, HeH:2013}. Nevertheless, a rigorous analysis of the
applicability of such methods is absent and one of the aims of this
work is to provide necessary and sufficient conditions of the
stability and to propose a method for their verification. It turns
out that stability depends on the invertibility of certain operators
which depend on the operator $A$, on the parameters of the method,
and on the opening angles of the corner points of $\Gamma$. In
particular, we found four angles in the interval $(0.1\pi,1.9\pi)$.
These angles are called critical and if the boundary $\Gamma$
possesses such corner points, the method is not stable. Note that
similar problems for the Sherman-Lauricella and Muskhelishvili
equations have been studied in \cite{DH:2011,DH:2011b, DH:2013a}.

Let us make a few technical remarks. Thus we identify any point
$(x,y)$ of $\sR^2$ with the corresponding point $z=x+iy$ in the
complex plane $\sC$. Let $S_\Gamma$ denote the Cauchy singular
integral operator on $\Gamma$,
  $$
  (S_\Gamma x)(t): =\frac{1}{\pi i}\ig  \frac{x(\tau)\,d\tau}{\tau -t}.
  $$
and let $M$ be the operator of complex conjugation,
$M\vp(t):=\overline{\vp(t)}$.

It is known \cite{Muskhelishvili1968} that the double layer
potential operator $V_\Gamma$ can be represented in the form
 $$
 V_\Gamma = \frac12(S_\Gamma + MS_\Gamma M).
 $$
Therefore, equation \eqref{eqn2} can be rewritten in the form
 \begin{equation}\label{eqn4}
 Ax=\left (I +\frac{1}{2} S_\Gamma + \frac{1}{2} MS_\Gamma M +T
\right )x=f.
 \end{equation}
Note that in this paper, equation \eqref{eqn2} is considered in the
space $L_p:=L^p(\Gamma, w)$ of all Lebesgue measurable functions $f$
satisfying the condition
 $$
||f||_{L^p}:= \left ( \int_\Gamma |f(t)|^p w(t)\,|dt| \right
)^{1/p}<\infty,\quad 1<p<\infty,
 $$
where $w(t):=\prod_{j=0}^{q-1}|t-\tau_j|^{\alpha_j}$,
$0<\alpha+1/p<1$ and $\tau_j,j=0,1,\ldots, q-1$ are the corner
points of $\Gamma$.

This paper is organized as follows. Section \ref{s2} is devoted to
Fredholm properties of the operator $A$. The results of this section
are well known and are presented here in order to make the paper
self contained and to introduce operators and functions which arise
in the study of approximation operators.

Sections \ref{s3} and \ref{s4} deals with the Nystr\"om method for
the equation \eqref{eqn2}. More precisely, we study the stability of
the Nystr\"om method based on composite Gauss-Legendre quadrature
rule
\begin{equation}\label{eqn5}
\int_0^1 u(s)ds \approx \suml_{l=0}^{n-1}\suml_{p=0}^{d-1}w_p
u(s_{lp})/n,
\end{equation}
where
\begin{equation*}
s_{lp}=\frac{l+\ve_p}{n},\quad l=0,1,\ldots , n-1,\; p=0,1,\ldots ,
d-1,
\end{equation*}
and $w_p$ and $0<\ve_0 < \ve_1 < \ldots < \ve_{d-1}$ are weights and
Gauss-Legendre points on the interval $[0,1]$. It's shown that the
method under consideration is stable if and only if certain
operators $B_{\omega_j,\delta , \ve}$ acting in the spaces of
sequences of complex numbers are invertible. It can be shown that
the operators arising belong to an algebra of Toeplitz operators
with matrix symbols. However, at present there are no efficient
criteria to verify whether operators from that algebra are
invertible or not. Therefore, we propose a numerical approach which
allows us to detect critical angles of the Nystr\"om method. Our
computations are restricted to the opening angles from the
$[0.1\pi,1.9\pi]$. The remaining opening angles can be also
considered but working with small angles and angles close to $2\pi$
requires much more effort and computational cost is high.

\section{Fredholm properties of the double layer potential equations\label{s2}}

It is well-known that the invertibility of the operator $A$ is a
necessary condition for the applicability of many numerical methods.
It depends on the curve $\Gamma$, on the compact operator $T$ and on
the space where the operator $A$ acts. In this section we present
certain conditions of the Fredholmness of the operator $A$
considered in the space $L^p(\Gamma,w)$, $1<p<\infty$ in the case of
piecewise smooth curves $\Gamma$. More precisely, let $\Gamma$ be a
simple piecewise smooth positively oriented contour in the complex
plane and let $\gamma: \sR \mapsto \sC$ be a $1$-periodic
parametrization of $\Gamma$. By $\cM_\Gamma$ we denote the set of
all corner points $\tau_0, \tau_1, \ldots , \tau_{q-1}$ of $\Gamma$
and assume that
 $$
\tau_j = \gamma (j/q),\; j=0,1,\ldots , q-1,
 $$
the function $\gamma$ is two times continuously differentiable on
each subinterval \\
$ (j/q,(j+1)/q)$ and
    $$
\left | \gamma ' (j/q+0 )\right | = \left | \gamma ' (j/q-0 )\right
|,\; j=0,1, \ldots , q-1.
   $$
Moreover, let $\omega_j$ denote angle between the two semi-tangents
at the corner point $\tau_j$, and let $\beta_j$ be the angle between
the right semi-tangent and the real axis $\sR$. Consider also the
contour
  $$
\Gamma_j:=\sR^- e^{i(\beta_j + \omega_j)} \bigcup \sR ^+
e^{i\beta_j},
 $$
where $\sR^-$ and $\sR^+$ are the positive semi-axes directed to and
away from the origin, respectively.

With each corner point $\tau_j$ we associate an operator $A_{\Gamma
_j}:L^2(\Gamma_j) \mapsto L^2(\Gamma_j)$ defined by $ A_{\Gamma_j}:
= I +V_{\Gamma_j}$ where $V_{\Gamma_j}$ is the double layer
potential operator on $\Gamma_j$.

Application of Theorem 1.9.5 of \cite{DS:2008} to the operator $A$
of \eqref{eqn4} leads to the following result
  \begin{proposition}\label{521}
Let $\Gamma$ be a simple closed piecewise smooth curve in the
complex plane $\sC$. Then the operator $A$ is Fredholm if and only
if all the operators $A_{\Gamma_j}$ are invertible for all
$j=0,1,\ldots, q-1$.
  \end{proposition}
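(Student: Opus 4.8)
The plan is to obtain Proposition~\ref{521} as a consequence of a localization principle, following the scheme packaged in Theorem 1.9.5 of \cite{DS:2008}. The first step is to discard the compact perturbation: since $T$ is compact, the operator $A$ of \eqref{eqn4} is Fredholm if and only if $I + V_\Gamma = I + \frac12(S_\Gamma + MS_\Gamma M)$ is Fredholm. Thus I may work with the purely singular part and reduce the question to the Fredholmness of $I + V_\Gamma$ on $L^p(\Gamma,w)$.

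The second step is to localize. By an Allan--Douglas type local principle, Fredholmness of $I + V_\Gamma$ is equivalent to the local invertibility of its local representatives at every point $t\in\Gamma$, and I would split the analysis into smooth points and corner points. At a smooth point $t_0\notin\cM_\Gamma$ the kernel $\frac{d}{dn_\tau}\log|t-\tau|$ is weakly singular, so $V_\Gamma$ is locally compact there; the local representative of $I+V_\Gamma$ then coincides with the identity modulo a locally compact term and is automatically locally invertible. Hence only the corner points can obstruct Fredholmness.

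The third step is a freezing-of-coefficients argument at a corner $\tau_j$. Near $\tau_j$ the contour is, up to a $C^2$ change of variables, indistinguishable from the pair of semi-tangents forming $\Gamma_j$, and since the Cauchy kernel $1/(\tau-t)$ is homogeneous of degree $-1$, the operator $S_\Gamma$ localizes to the corresponding operator on the model contour. Consequently the local representative of $I+V_\Gamma$ at $\tau_j$ is boundedly equivalent to the model operator $A_{\Gamma_j}=I+V_{\Gamma_j}$. The localization principle then yields that $A$ is Fredholm if and only if each of these local representatives is invertible, i.e. if and only if every $A_{\Gamma_j}$ is invertible.

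The delicate point — and the one for which I would lean most heavily on \cite{DS:2008} — is twofold. First, the passage from the weighted space $L^p(\Gamma,w)$ to the model space $L^2(\Gamma_j)$ must be justified, since the exponent $p$ and the local weight exponent $\alpha_j$ enter the construction of the local representative, and one has to confirm that the resulting model operator is exactly $A_{\Gamma_j}$. Second, the local condition is \emph{invertibility} of $A_{\Gamma_j}$ rather than mere Fredholmness: this is not accidental, because the model operators live on the scale-invariant contour $\Gamma_j$ and, after a Mellin transform, become multiplication by an operator-valued function governed by the single angle $\omega_j$; for such dilation-invariant Mellin convolution operators the Fredholm symbol condition and the invertibility condition coincide, so no index gap can arise. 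Verifying this equivalence, together with the compactness of the error terms produced by the diffeomorphism and the weight after localization, is the technical core; once it is in place the statement is immediate from the cited theorem.
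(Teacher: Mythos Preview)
Your proposal is correct and follows exactly the approach the paper takes: the paper's entire justification is the single sentence ``Application of Theorem 1.9.5 of \cite{DS:2008} to the operator $A$ of \eqref{eqn4} leads to the following result,'' and what you have written is a faithful unpacking of what that application involves. Your identification of the delicate points (the weight/exponent bookkeeping in passing to the model space, and why the local condition is invertibility rather than Fredholmness of $A_{\Gamma_j}$) is apt and is precisely the content absorbed into the cited theorem.
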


Let $\sM$ and $\sM ^{-1}$ denote respectively the direct and the
inverse Mellin transforms, i.e.
\begin{align*}
(\sM f)(z) &=\int_0^{+\infty}x^{1/p+\alpha-zi-1}f(x)dx,\\
 (\sM ^{-1}
f)(x)&=\frac{1}{2\pi}\int_{-\infty}^{+\infty}x^{zi-1/p-\alpha}f(z)dz.
\end{align*}
 The Mellin convolution operator $\cM(b)$ with the symbol $b$ is
defined by
  \begin{equation*}
\cM(b)x(\sigma)=((\sM ^{-1}b\sM)x)(\sigma),
  \end{equation*}
and for some classes of symbols $b$, this operator can be
represented in the integral form
  \begin{equation}\label{eqn6}
\cM(b)x(\sigma)  = \int_0^{+\infty} {\bf k}\left (
\frac{\sigma}{s}\right )x(s) \frac{ds}{s},
 \end{equation}
where ${\bf k} =\sM^{-1}b$.

Consider now the operator $\cN_\omega : L^p(\sR ^+, t^{\alpha_j})
\to L^p(\sR ^+, t^{\alpha_j})$ defined by
  \begin{equation*}
(\cN_\omega (\phi))(\sigma) = \frac{1}{\pi i}\int_0^{+\infty}
\frac{\phi (s) ds}{s-\sigma e^{i\omega}}
  \end{equation*}
It is easily seen that $A_{\Gamma_j}$ is isometrically isomorphic to
the matrix operator $A_{\omega_j}:L^p(\sR^+, t^{\alpha_j})^2 \mapsto
L^p(\sR^+, t^{\alpha_j})^2$,
    \begin{equation}\label{eqn7}
A_{\omega_j} = \left (
\begin{array}{cc}
  I & (1/2)\big ( \cN_\omega - \cN_{2\pi-\omega}\big ) \\[1ex]
  (1/2)\big ( \cN_\omega - \cN_{2\pi-\omega}\big ) & I
  \end{array}
  \right ),
    \end{equation}
where
 $$
L^p(\sR^+, t^{\alpha_j})^2 := L^p(\sR^+, t^{\alpha_j})\times
L^p(\sR^+, t^{\alpha_j}),
 $$
and the corresponding isomorphism is given by the relation $A
\mapsto \eta A \eta ^-1$ with the mapping $\eta :L_p(\Gamma
_j,t^{\alpha_j}) \mapsto L^p(\sR^+, t^{\alpha_j})^2 $ defined by

$$
\eta (f)(s) = (f(se^{i(\beta_j +\omega_j)}),
f(se^{i\beta_j}))^T,\;s\in \sR ^+.
$$

It is well-known \cite{RSS:2011,DS:2008} that $\cN_\omega$ is the
Mellin convolution operator $\cM({\bf n}_\omega)$ with the symbol
   \begin{equation}\label{eqn8}
  {\bf n}_{\omega_j}(y)=\frac{e^{(\pi-\omega_j)y}}{\sinh \pi y},\quad
  y=z+\left (\frac{1}{p}+\alpha_j\right )i,\; z\in \sR.
  \end{equation}
This immediately leads to the formula
    \begin{equation*}
{\mathrm {smb}} \left ((1/2) (\cN_{\omega_j} -
\cN_{2\pi-\omega_j})\right)=\frac{\sinh (\pi-\omega_j)y}{\sinh \pi
y}
  \end{equation*}
  where $y$ as above.
  Thus
  \begin{equation}\label{smb}
 \mathrm {smb}\, A_{\omega_j}(y)=  \left (
\begin{array}{cc}
  \D 1 & \D\frac{\sinh (\pi -\omega_j)y}{\sinh \pi y} \\
  \D\frac{\sinh (\pi - \omega_j)y}{\sinh \pi y} & 1
  \end{array}
  \right   ).
 \end{equation}
Note  that the Mellin operator $(1/2)(\cN_{\omega_j} -
\cN_{2\pi-\omega_j})$ can be also represented in the integral form
  \eqref{eqn6} with the kernel $\mathbf{k}=\mathbf{k}_{\omega_j}$
  having the form
  \begin{equation}\label{eqn9}
  {\bf k}(z) = {\bf k}_{\omega_j} (z) = \frac{1}{\pi i}\frac{iz \sin
  \omega_j}{(1-ze^{i\omega_j})(1-ze^{-i\omega_j})}
  \end{equation}
  \begin{corollary} \label{cor2}
Let $\Gamma$ be a simple closed piecewise smooth contour satisfying
the conditions of Section \ref{s2}. Then the operator $A$ of
\eqref{eqn4} is Fredholm in the space $L^2(\Gamma)$.
\end{corollary}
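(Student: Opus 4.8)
By Proposition \ref{521}, the operator $A$ is Fredholm in $L^2(\Gamma)$ if and only if each local operator $A_{\Gamma_j}$, $j=0,1,\ldots,q-1$, is invertible. Since $A_{\Gamma_j}$ is isometrically isomorphic to the matrix operator $A_{\omega_j}$ of \eqref{eqn7}, and since $L^2(\Gamma)$ corresponds to $p=2$ and $\alpha_j=0$, the plan is to show that every $A_{\omega_j}$ is invertible on $L^2(\sR^+)^2$. I would write $A_{\omega_j}=I+K_j$ with
$$
K_j=\begin{pmatrix} 0 & C_j \\ C_j & 0\end{pmatrix}, \qquad C_j=\tfrac12\big(\cN_{\omega_j}-\cN_{2\pi-\omega_j}\big),
$$
and obtain invertibility of $A_{\omega_j}$ from a Neumann series once it is shown that $\|K_j\|<1$. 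Because $\|K_j\|=\|C_j\|$, everything reduces to the single norm estimate $\|C_j\|<1$.

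The heart of the argument is the evaluation of $\|C_j\|$. By \eqref{smb}, the operator $C_j$ is the Mellin convolution operator with symbol $f_j(y)=\sinh((\pi-\omega_j)y)/\sinh(\pi y)$. On $L^2(\sR^+,t^{\alpha_j})$ with $\alpha_j=0$ the Mellin transform is, up to a unitary normalization, a Fourier transform in the variable $\log t$, so $C_j$ is unitarily equivalent to multiplication by $f_j$ restricted to the line $y=z+i/2$, $z\in\sR$; hence $\|C_j\|=\sup_{z\in\sR}|f_j(z+i/2)|$. First I would exploit the simplification available at $p=2$, $\alpha_j=0$: since $\sinh(\pi(z+i/2))=i\cosh(\pi z)$, a direct computation gives
$$
|f_j(z+i/2)|^2=\frac{\sin^2\!\big(\tfrac{\pi-\omega_j}{2}\big)+\sinh^2\!\big((\pi-\omega_j)z\big)}{\cosh^2(\pi z)}.
$$

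It then remains to bound this quantity by a constant strictly less than $1$. Writing the denominator as $1+\sinh^2(\pi z)$, the difference between numerator and denominator equals $-\cos^2\!\big(\tfrac{\pi-\omega_j}{2}\big)+\big(\sinh^2((\pi-\omega_j)z)-\sinh^2(\pi z)\big)$. Here $0<\omega_j<2\pi$ forces $|\pi-\omega_j|<\pi$, so the bracketed term is nonpositive (vanishing only at $z=0$), while $\omega_j\notin\{0,2\pi\}$ guarantees $\cos^2\!\big(\tfrac{\pi-\omega_j}{2}\big)>0$. Thus the numerator is strictly smaller than the denominator for every $z$, and since $|f_j(z+i/2)|\to0$ as $z\to\pm\infty$, the continuous function $|f_j|$ attains its supremum, whose value is $<1$. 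Consequently $\|C_j\|<1$, the Neumann series for $(I+K_j)^{-1}$ converges, each $A_{\omega_j}$ is invertible, and Proposition \ref{521} yields the Fredholmness of $A$. I expect the only delicate point to be the identification $\|C_j\|=\sup_{z\in\sR}|f_j(z+i/2)|$, that is, the passage from the operator norm of the Mellin convolution to the supremum of its symbol on the correct horizontal line; the trigonometric estimate itself is elementary once the factor $\cosh(\pi z)$ in the denominator is recognized.
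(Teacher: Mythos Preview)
Your argument is correct, and it takes a genuinely different route from the paper's. The paper checks directly that the determinant $1-f_j(y)^2$ of the matrix symbol \eqref{smb} does not vanish on the line $y=z+i/2$: it sets $\sinh((\pi-\omega_j)y)=\pm\sinh(\pi y)$, separates real and imaginary parts, and rules out solutions by an elementary case analysis, then invokes the standard fact that a matrix Mellin convolution is invertible iff its symbol is pointwise invertible. You instead exploit the special block structure $A_{\omega_j}=I+K_j$ to reduce to a \emph{scalar} norm estimate $\|C_j\|=\sup_{z\in\sR}|f_j(z+i/2)|<1$ and conclude by Neumann series. Your computation of $|f_j(z+i/2)|^2$ is correct (the key identity $\sinh(\pi(z+i/2))=i\cosh(\pi z)$ is exactly what makes $p=2$, $\alpha_j=0$ clean), and the inequality $\sin^2((\pi-\omega_j)/2)+\sinh^2((\pi-\omega_j)z)<1+\sinh^2(\pi z)$ follows as you say from $|\pi-\omega_j|<\pi$ together with $\cos((\pi-\omega_j)/2)\neq0$; the decay at infinity then turns the strict pointwise inequality into a strict bound on the supremum. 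What you gain is a quantitative conclusion (an explicit bound $\|A_{\omega_j}^{-1}\|\le(1-\|C_j\|)^{-1}$) and you only need the unitarity of the scalar Mellin transform on $L^2(\sR^+)$ rather than the invertibility criterion for matrix Mellin convolutions. The paper's approach, on the other hand, is the template that would still work if the diagonal of \eqref{eqn7} were not simply $I$, where a Neumann-series reduction would not be available.
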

  \begin{proof}
The matrix Mellin operator $A_{\omega_j}$ is invertible in
$L^2(\Gamma)$ if and only if its symbol \eqref{smb} is invertible.
The determinant of $\mathrm{smb}\,A_{\omega_j}$ is $1-\sinh ^2
(\pi-\omega_j)y/\sinh ^2 \pi y$, and it vanishes if and only if
$\sinh (\pi -\omega_j)y = \sinh \pi y$ or $\sinh (\pi -\omega_j)y =
-\sinh \pi y$. Consider, for example, the first of these equations
in the case $p=2$ and $\alpha_j=0$. Separating the real and
imaginary parts, one obtains the following system of equations
  \begin{align*}
    \cosh ((\pi - \omega_j)z) \sin \frac{\pi-\omega_j}{2}&=   \cosh (\pi z)\\
   \sinh ((\pi - \omega_j)z)\cos \frac{\pi-\omega_j}{2}&=0,
   \end{align*}
where $z\in\sR$. Since $\cos ((\pi-\omega_j)/2) \not = 0$ for any
$\omega_j \in (0,2\pi)$, the second equation of the system is
satisfied if $z=0 $ or $\pi-\omega_j=0$. If $z=0$, the first
equation of the system becomes $\sin ((\pi-\omega_j)/2)=1$ which has
no solution for $\omega_j \in (0,2\pi)$. On the other hand, if
$\pi-\omega_j=0$, the first equation becomes $\cosh (\pi z)=0$ which
obviously has no solution. Thus, the symbol of $A_{\omega_j}$ does
not vanish on the line $\sR + i/2$. Therefore, the operator
$A_{\omega_j}$ is invertible for any $j=0,1,\ldots , q-1$ and so are
the operators $A_{\Gamma_j}$. Now one can apply Proposition
\ref{521} and obtain Fredholmness of the operator $A$.
  \end{proof}

    \section{Stability of the Nystr\"om method\label{s3}}

From now on we consider our operators as acting on the space $L^2$
without weight, i.e. we set $L^2:=L^2(\Gamma,0)$. Therefore,
according to Corollary \ref{cor2}, the operator $A$ of \eqref{eqn4}
is Fredholm. Choose an $n\in\sN$ and assume that $d$ is a
non-negative integer such that $n>d+1$.  By $S_n^d(\Gamma)$ we
denote the space of the smoothest splines of degree $d$ on $\Gamma$
associated with the parametrization $\gamma:\sR\to \Gamma$, cf.
\cite{DH:2011}. Consider the two sets of points on $\Gamma$
\begin{equation*}
\tau_{lp}=\gamma \left (\frac{l+\ve_p}{n} \right ), \; t_{lp}=\gamma
\left (\frac{l+\delta_p}{n}\right ),\; l=0,1,\ldots,n -1;\,
p=0,1,\ldots,d-1.
\end{equation*}
where $0<\ve_0 < \ve_1<\ldots < \ve_{d-1}<1$ and $0<\delta_0
<\delta_1< \ldots < \delta_{d-1}<1$ are real numbers.

 If the integral operator $K$,
 \begin{equation*}
K \vp (t):= \int_\Gamma k(t,\tau) \vp(\tau)\,d\tau
\end{equation*}
has a sufficiently smooth kernel $k$ and if $\vp$ is a Riemann
integrable function, then we can approximate it by the quadrature
rule \eqref{eqn5}. Thus
      \begin{equation}\label{eqn10}
      \begin{aligned}
&\int_\Gamma k(t,\tau) \vp(\tau)\,d\tau =\int_0^1
    k(\gamma(\sigma),\gamma(s)) \vp(\gamma(s))\gamma'(s)\,ds  \\
 &
 \quad \approx     K^{(\ve,n)} \vp(t) =\sum_{l=0}^{n-1} \sum_{p=0}^{d-1} w_p
 k(t, \tau_{lp}) \vp(\tau_{lp}) \tau'_{lp}/n,
\end{aligned}
 \end{equation}
where $\tau'_{lp}=\gamma'((l+\ve_p)/n )$. In particular,
straightforward calculations show that for the kernel $k$ of the
double layer potential operator $V_\Gamma$, the limit
 $$
\lim \limits_{t\to \tau}k(t,\tau) = \frac{i\ima[\overline {\gamma
'(s)}\gamma'' (s)]}{\gamma '(s)|\gamma '(s)|^2},\; \tau=\gamma (s)
 $$
is finite for any $\tau \notin \cM_\Gamma$. Thus the kernel
$k=k(\tau,t)$ of the double layer potential operator $V_\Gamma$
behaves well and formula \eqref{eqn10} can be used in order to
approximate the operator $V_\Gamma$ even in the case where $\ve_p =
\delta_p$.

Let $Q^\delta_n: L_{\infty}(\Gamma) \mapsto S_n^d $ denote the
interpolation projection on the space $S_n^d$ such that
\begin{equation*}
Q_n^\delta x(t_{lp})=x(t_{lp}),\quad l=0,1,\ldots , n-1,\;
p=0,1,\ldots , d-1.
\end{equation*}
for all $x$ from the set $\bR (\Gamma)$ of all Riemann integrable
functions on $\Gamma$. Note that if none of $\delta_p$ is equal to
$0.5$, such projection operators $Q_n^delta$ exist and the sequence
$(Q_n^\delta)_{n\in \sN}:\bR (\Gamma) \mapsto L^2(\Gamma)$ converges
strongly to the corresponding embedding operator
\cite{Prossdorf1991}, viz.
\begin{equation}\label{eqn11}
\lim \limits_{n\to \infty} \|Q_n^\delta - f\|_{L^2(\Gamma)}=0,\quad
f \in \bR (\Gamma).
\end{equation}
Let $P_n: L^2(\Gamma) \mapsto S_n^d$ be the orthogonal projection
onto the spline space $S_n^d$. Recall that on the space $L^2$ the
sequence $(P_n)$ converges strongly to the identity operator.

Consider the Nystr\"om method for the double layer potential
equation \eqref{eqn2}. For simplicity, we drop the compact operator
$T$ and consider the equation
\begin{equation}\label{eqn12}
A_\Gamma x = (I+V_\Gamma)x = f.
\end{equation}
This simplifies the notation but does not influence the proofs of
main results on the stability of the corresponding method. An
approximate solution $x_n$ of \eqref{eqn12} can be derived from the
equations
\begin{equation}
Q_n^\delta A_\Gamma^{(\ve , n)}P_n x_n := Q_n^\delta P_n x_n +
Q_n^\delta V_\Gamma ^{(\ve , n)}P_n x_n = Q_n^\delta
 f, \quad x_n \in S^d_n, n\in \sN.
\end{equation}
These operator equations are equivalent to the following systems of
linear algebraic equations
\begin{equation}\label{eqn13}
\begin{aligned}
x(t_{kr}) & + \frac{1}{2\pi i}\suml_{l=0}^{n-1}\suml_{p=0}^{d-1} w_p
x(t_{lp})\left ( \frac{\tau '_{lp}}{\tau_{lp} -
t_{kr}}-\frac{\overline{\tau '_{lp}}}{\overline{\tau_{lp}}
-\overline
{t_{kr}}} \right )\frac{1}{n} \\
&= f(t_{kr}), \quad k=0,1,\ldots , n-1,\; r=0,1,\ldots , d-1.
\end{aligned}
\end{equation}
Let us consider examples of approximate solution of the equation
\eqref{eqn12} given on two different contours $\cL_1=\cL_1(\omega)$
and $\cL_2=\cL_2(\omega)$, $\omega\in(0,2\pi)$ in the case of
continuous and discontinuous right hand sides.
\begin{figure}[!tb]
   \centering
\includegraphics[height=45mm,width=60mm]{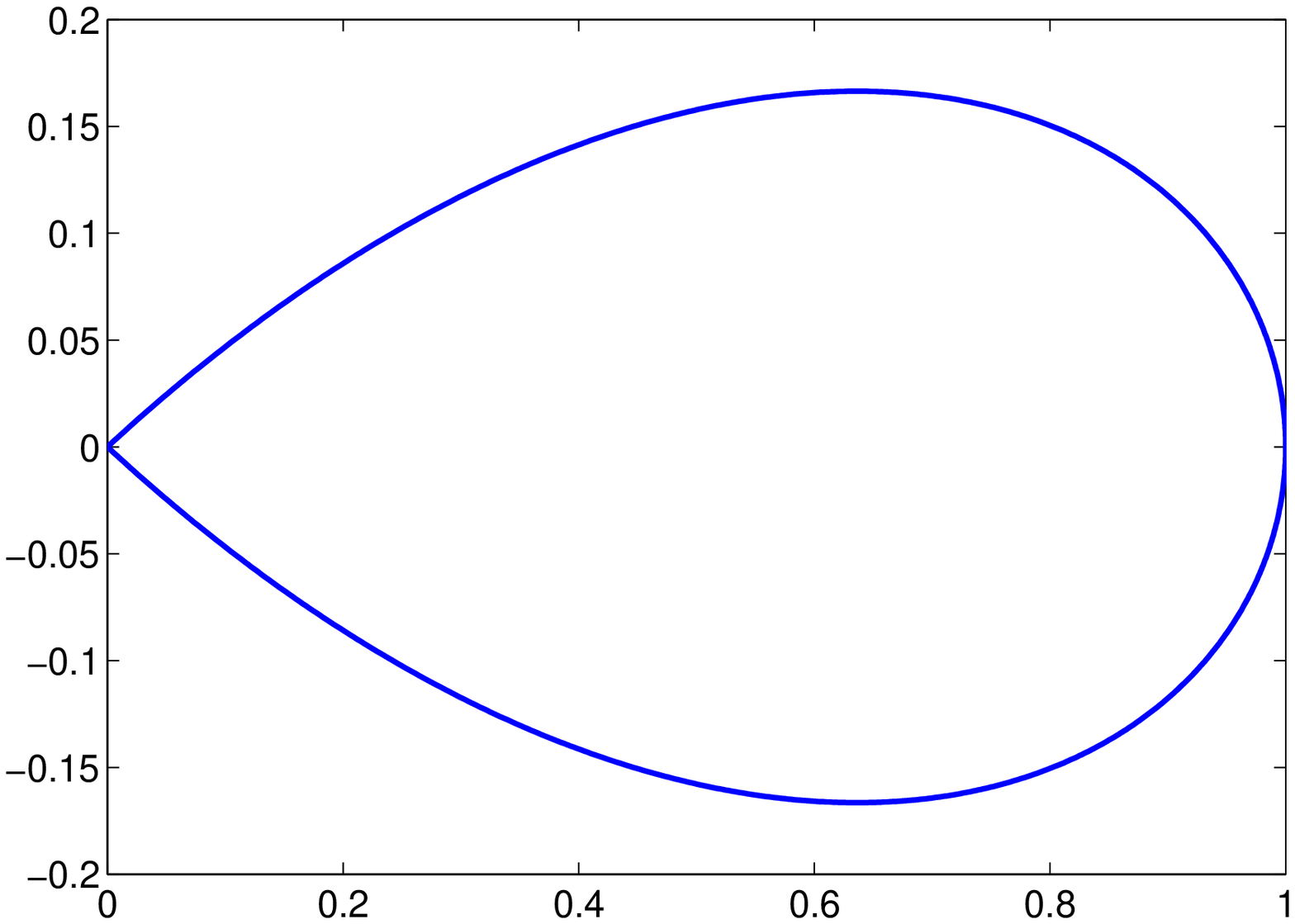}~%
 \includegraphics[height=45mm,width=60mm]{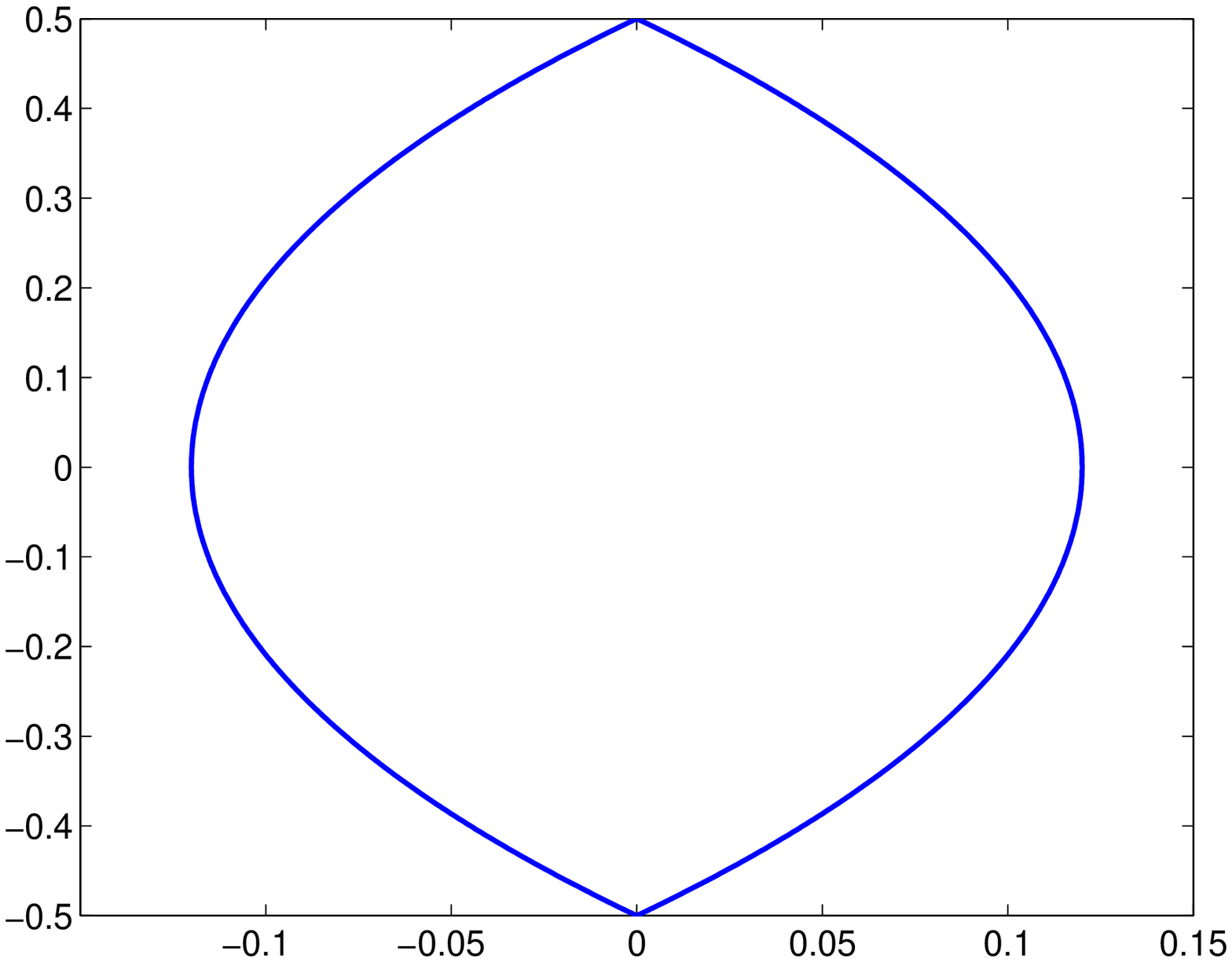}
 \caption{The curves $\cL_1$, left, and $\cL_2$, right. All corner points have
the same opening angle $\omega=0.3\pi$.}
 \label{Gamma12}
\end{figure}%
Let $f_1$ and $f_2$ be the following functions
 $$
f_1(z) = -z|z|,
 $$
and
 $$
f_2(z) = \begin{cases} -1+iz &\text{ if } \ima z <0 \\
\quad \! 1+iz &\text{ if } \ima z \geq 0 \end{cases}
$$
The curves $\cL_1$ and $\cL_2$ have, respectively, one and two
corner points of the magnitude $\omega\in (0,2\pi)$ each, and are
defined by
\begin{equation*}
 \cL_j:=\{ t\in\sC: t=\gamma_j(s), \quad s \in [0,1] \}, \quad
 j=1,2.
 \end{equation*}
where
\begin{align*}
 \gamma_1(s)&= \sin (\pi s) \exp (i \omega
(s-0.5)), \quad s \in [0,1],\\[1.5ex]
 \gamma_2(s)& = \begin{cases}
\D -\frac{1}{2} \cot  (\omega/2 )+ \frac{1}{2\sin (\omega/2)} \exp
(i\omega (2s-0.5)) & \text{ if} \; 0 \leq s \leq 1/2;
\\[1.5ex]
\D \frac{1}{2} \cot (\omega/2 )-\frac{1}{2\sin (\omega/2 )}\exp
(i\omega (2s-1.5)) & \text{ if}\;  1/2 < s \leq 1.
\end{cases}
\end{align*}
The graphs of the curves $\cL_1$ and $\cL_2$ which are used in our
examples below, are presented in Figure \ref{Gamma12}.

The right-hand side $f_1(z)$ is continuous on both curves $\cL_1$
and $\cL_2$, whereas $f_2(z)$ is discontinuous on both curves.
Moreover, one of the discontinuity points of $f_2$ coincides with
the angular point of $\cL_1$.
\begin{figure}[!th]
  \centering
\includegraphics[height=45mm,width=60mm]{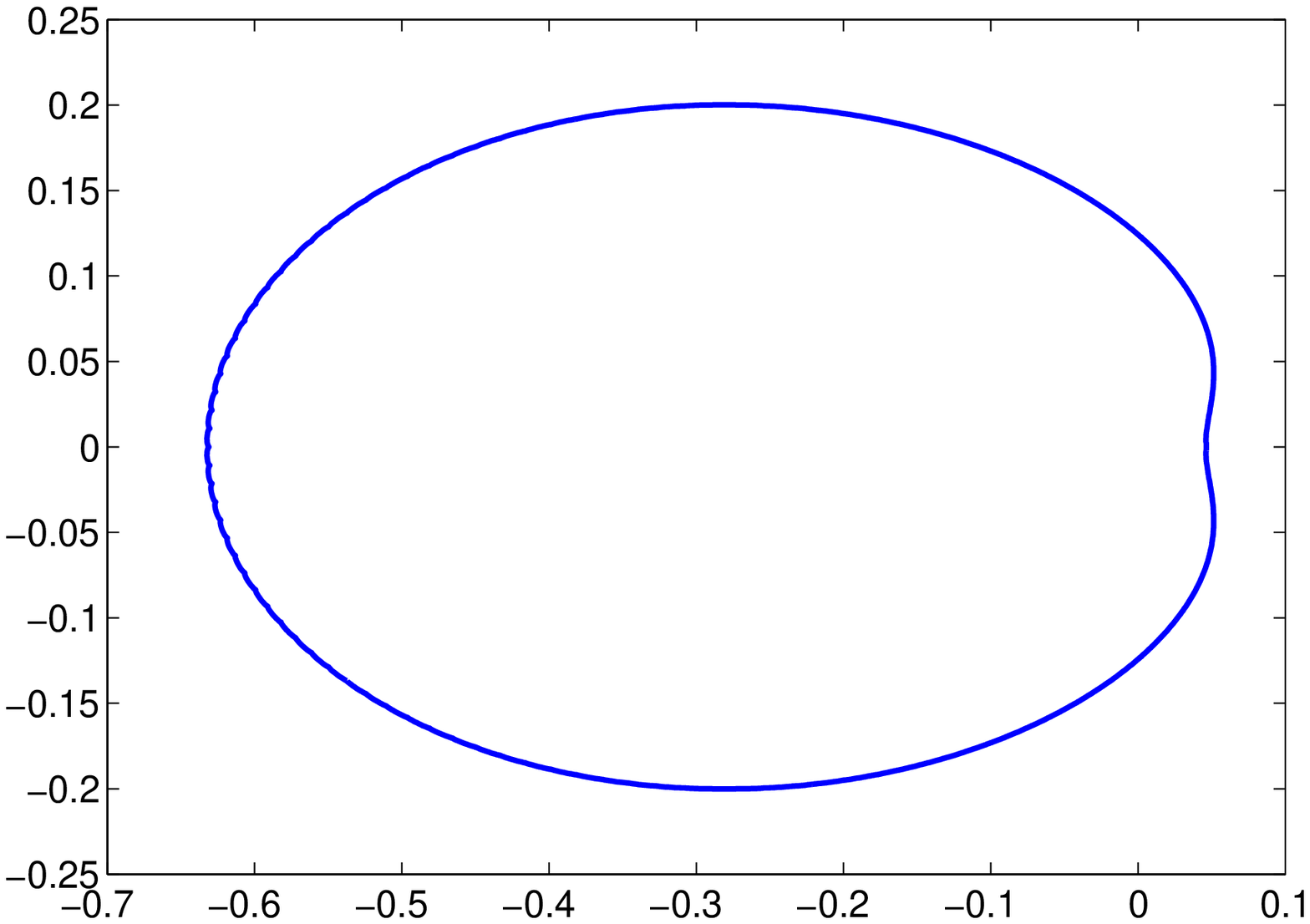}~%
 \includegraphics[height=45mm,width=60mm]{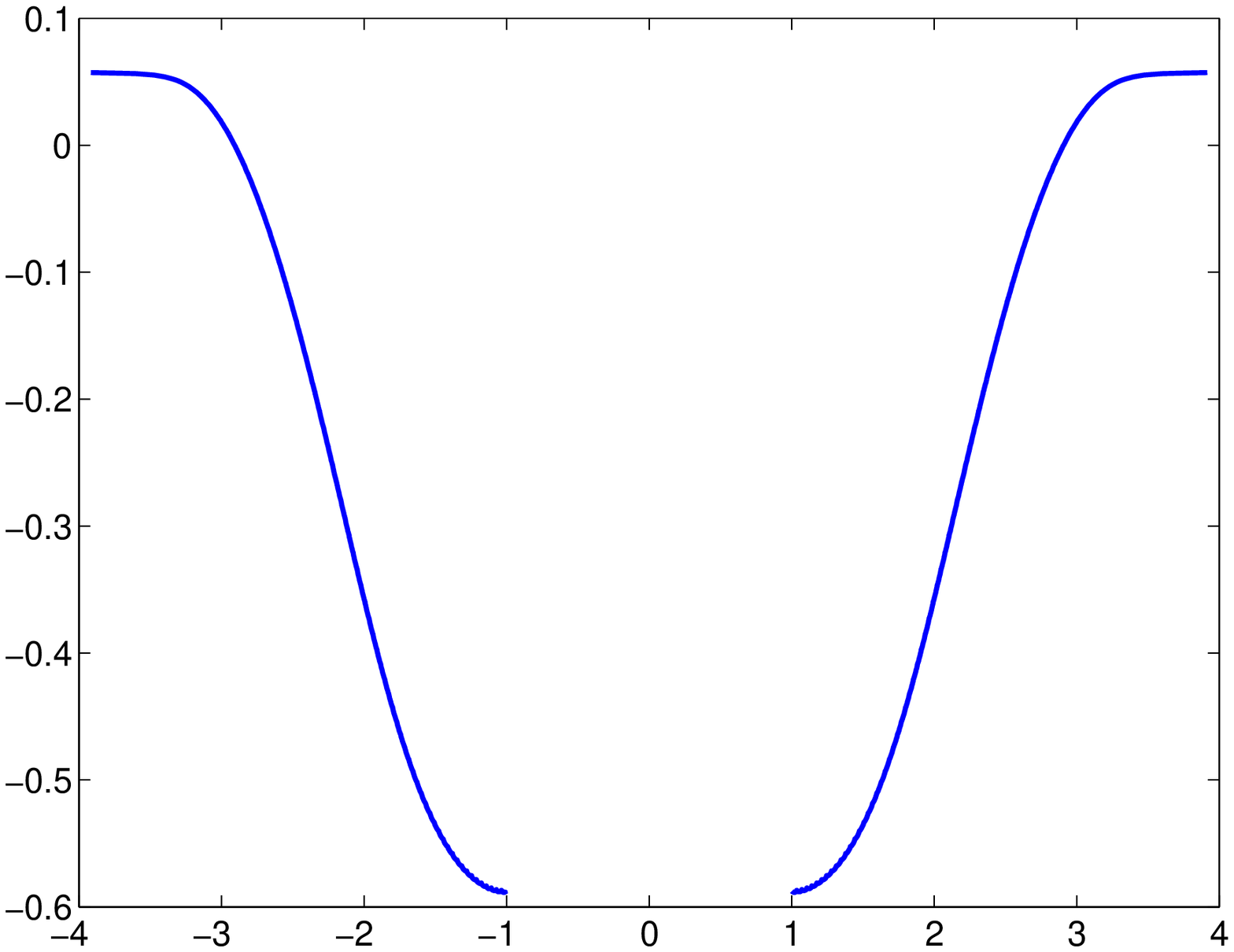}
\includegraphics[height=45mm,width=60mm]{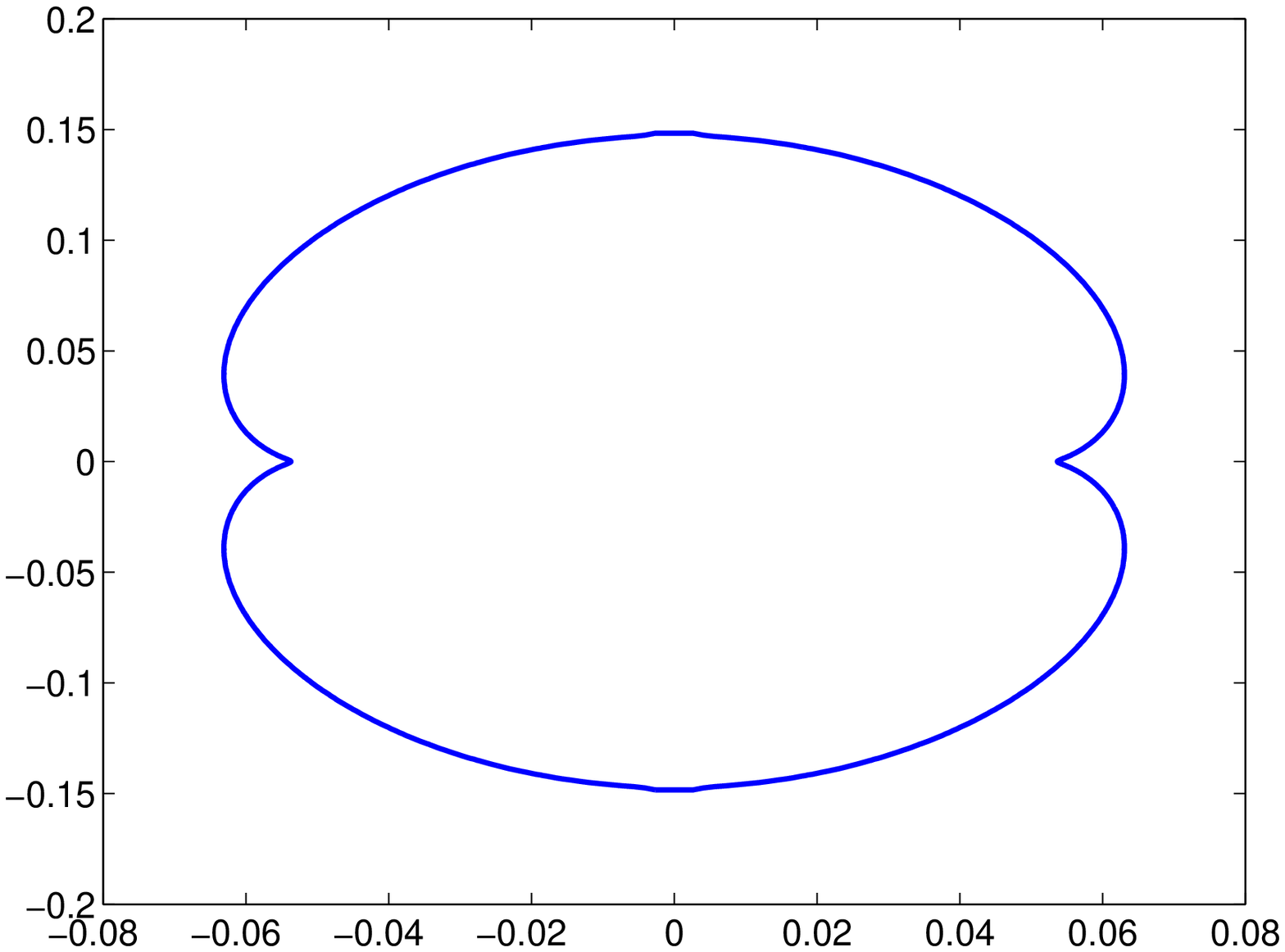}~%
 \includegraphics[height=45mm,width=60mm]{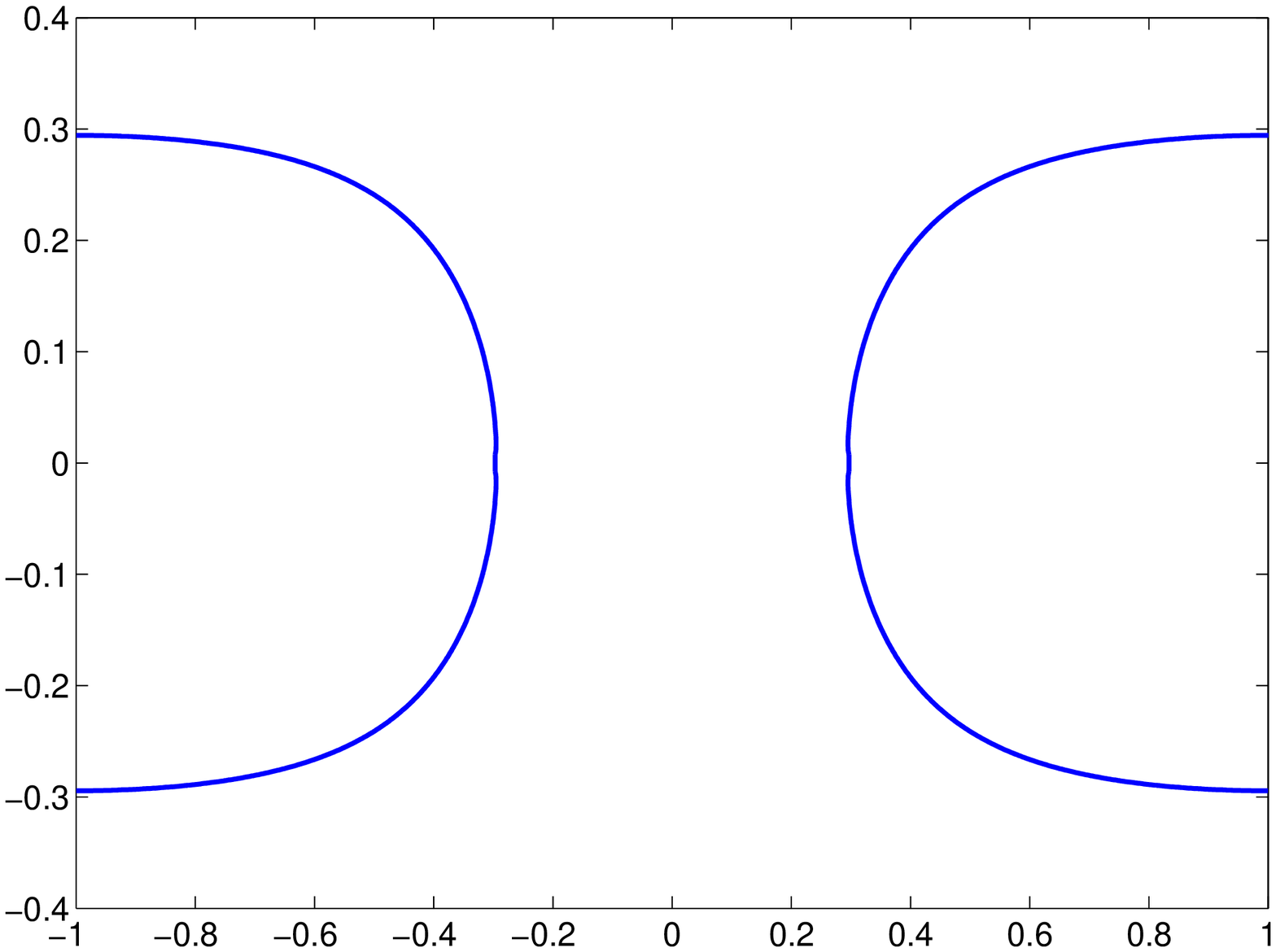}
   \caption{\sf Approximate solutions of \eqref{eqn12} with two different right hand sides
   and contours obtained by using method \eqref{eqn13} with $n=512, d=16$.
   Left: Solutions in the case of continuous r.-h.s. $f_1(z)$. Right:
   Solutions in the case of discontinuous r.-h.s. $f_2(z)$.
   First row: Equations on $\cL_1$. Second row: Equations on $\cL_2$.}
   \label{fig:app_sol}
\end{figure}
Approximate solutions of equation are obtained by the Nystr\"om
method \eqref{eqn13} with $d=16$ and $\ve_p=\delta_p$ and their
graphs are presented in  Figure \ref{fig:app_sol}. In the following
table, the term $E_n^{(f_k,\cL_j)}$ shows the relative error
$\|x_{2n}-x_n\|_2 / \|x_{2n}\|_2$ where $x_n$ is the approximate
solution of the equation \eqref{eqn12} for the contour
$\cL_j(0.3\pi),\; j=1,2$ with the right hand side $f_k$, $k=1,2$.
 \begin{center}
\begin{tabular}{|c|c|c||c|c|}
\hline
n & $E_n^{(f_1,\cL_1)}$ & $E_n^{(f_1,\cL_2)}$ & $E_n^{(f_2,\cL_1)}$ & $E_n^{(f_2,\cL_2)}$ \\
\hline $32$ & $2.5\times 10^{-3}$ & $2.6\times 10^{-3} $& $1.5\times 10^{-2}$&$ 2.0\times 10^{-2}$ \\
\hline $96$ & $8.3\times 10^{-4} $&$ 1.1\times 10^{-3}$ &$ 7.5\times 10^{-3}$&$1.3\times 10^{-2} $\\
\hline $256$ &$3.1\times 10^{-4} $&$ 2.1\times 10^{-4} $&$ 4.0\times 10^{-3}$&$7.3\times 10^{-3} $\\
\hline
\end{tabular}
\end{center}

\vspace{2mm}

\noindent It is worth noting that a better convergence rate can be
achieved by using certain modifications of the Nystr\"om method
\cite{Bre:2012a, Bre:2010a, HeH:2013} but the main focus of this
paper is on the stability and on the angles the presence of which
induces the instability of the Nystr\"om method.

Let $(A_n)$ be a bounded sequence of linear bounded operators $A_n:
S_n^d \mapsto S_n^d$. The set $\cT$ of such sequences equipped with
componentwise operations of addition, multiplication, involution and
multiplication by scalars, and with the norm
$$ \|(A_n)\| :=\sup\limits_{n\in \sN} \|A_n\|$$
becomes a $C^*$-algebra.
\begin{definition} The sequence $(A_n)\in \cT$ is called
stable if there is an $n_0\in\sN$ such that for all $n\geq n_0$ the
operators  $A_nP_n :S_n^d(\Gamma) \mapsto S_n^d(\Gamma)$ are
invertible and the norms $\|(A_nP_n)^{-1} P_n\|_{n\geq n_0}$ are
uniformly bounded.
\end{definition}

 \begin{remark}
The stability of the method is directly connected to the condition
numbers of the corresponding approximation methods. The graphs in
Figure \ref{fig:ang} show that the Nystr\"om method for the double
layer potential operator considered on the contours $\cL_1(\omega)$,
$\omega=0.25183\pi, \pi/3,\pi/4,\pi/2$ is stable. An abnormality of
the graph in the case $\omega=0.25183\pi$ is caused by the proximity
of this point to the so-called "critical angle". We refer the reader
to Section \ref{s4} for a more detailed discussion of this
phenomena.
 \end{remark}

\begin{figure}[!tb]
\includegraphics[height=45mm,width=120mm]{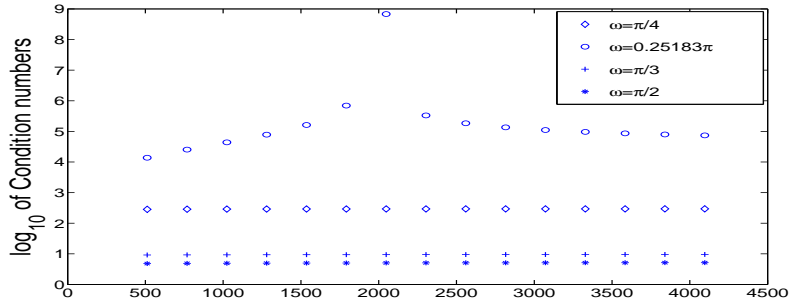}
   \caption{\sf Condition numbers for some opening angles.
   The numbers of discretization points is $16n$.}
   \label{fig:ang}
\end{figure}%
It is well known (see, for example, \cite{RSS:2011, DS:2008,
Prossdorf1991}) that the stability of the approximation method
$(A_n)$ is equivalent to the invertibility of the coset $(A_n)+ \cG$
in the quotient algebra $\cT / \cG$ where $\cG$ is the set of all
bounded sequences uniformly convergent to zero,
$$\cG = \{ (G_n)\in \cT: \liml_{n\to \infty} \|G_n\| =0\}.$$
It turns out that in many cases the quotient algebra $\cT /\cG$ is
too large to treat the invertibility problem efficiently. Therefore,
one often considers a smaller algebra $\cA\subset \cT$ of sequences
containing the approximation method in the question, at the same
time expanding the ideal $\cG$ to an ideal $\cJ$ in such a way that
the initial problem will be equivalent to the invertibility of the
corresponding coset in the quotient algebra $\cA/\cJ$. More
precisely, let $\cA \subset \cT$ denote the close subalgebra of
$\cT$ containing all sequences $(A_n)$ such the  strong limits
  $$
s-\liml_{n\to \infty}A_nP_n =A \text{  and  } s- \liml_{n\to
\infty}(A_n)^*P_n = A^*
 $$
exist. Moreover, if $\cK (L^2(\Gamma))$ is the set of all compact
operators on $L^2(\Gamma)$, then the family of the sequences
 $$
\cJ = \{ (J_n): J_n=P_nKP_n+G_n,\; K\in \cK(L^2(\Gamma)),\, G_n \in
\cG\}.
$$
is a closed two-sided ideal of $\cA$.

\sloppy

\begin{theorem}[{see \cite{RSS:2011, DS:2008, Prossdorf1991}}] \label{t1}
Assume that $(A_n)\in \cA$. Then the sequence $(A_n)$ is stable if
and only if the operator $A$ is invertible and the coset $(A_n)+\cJ$
is invertible in the quotient algebra $\cA /\cJ$.
\end{theorem}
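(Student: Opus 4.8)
The plan is to recast stability as a single invertibility statement in the $C^*$-algebra $\cA/\cG$ and then to split that statement into its two advertised halves by means of an injective $*$-homomorphism. First I would note that $\cG\subseteq\cA$ and that $\cA/\cG$ is a unital $C^*$-subalgebra of $\cT/\cG$; since $C^*$-subalgebras are inverse-closed, the stability of $(A_n)$ — that is, the invertibility of $(A_n)+\cG$ in $\cT/\cG$ — is equivalent to the invertibility of $(A_n)+\cG$ already in $\cA/\cG$. It therefore suffices to show that $(A_n)+\cG$ is invertible in $\cA/\cG$ precisely when $A$ is invertible in $\mathcal{B}(L^2(\Gamma))$, the algebra of bounded operators on $L^2(\Gamma)$, and $(A_n)+\cJ$ is invertible in $\cA/\cJ$.

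Next I would introduce the map $W:\cA\to\mathcal{B}(L^2(\Gamma))$ given by $W(A_n):=s\text{-}\liml_{n\to\infty}A_nP_n=A$. Using that $B_nP_n\to B$ strongly with uniformly bounded norms and that $A_nB_nP_n=A_nP_n\,B_nP_n$ (each $B_n$ leaves $S_n^d$ invariant, so $B_nP_n=P_nB_nP_n$), one checks that $W$ is multiplicative; the defining requirement of $\cA$ that $s\text{-}\liml_{n\to\infty}(A_n)^*P_n$ exist yields $W((A_n)^*)=A^*$, while $W(P_n)=I$ and $\|W(A_n)\|\le\|(A_n)\|$. Hence $W$ is a contractive unital $*$-homomorphism that annihilates $\cG$. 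The decisive point is that $W$ carries $\cJ$ onto $\cK(L^2(\Gamma))$ via $W(P_nKP_n+G_n)=K$ and is faithful modulo $\cG$: if $(J_n)\in\cJ$ satisfies $W(J_n)=0$, then writing $J_n=P_nKP_n+G_n$ gives $K=W(J_n)=0$, so $(J_n)=(G_n)\in\cG$.

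With this in hand the theorem reduces to an abstract $C^*$-argument. Consider
$$
\Phi:\cA/\cG\longrightarrow \mathcal{B}(L^2(\Gamma))\oplus(\cA/\cJ),\qquad \Phi\big((A_n)+\cG\big):=\big(W(A_n),\,(A_n)+\cJ\big),
$$
which is a well-defined unital $*$-homomorphism because $\cG\subseteq\cJ$ and $W(\cG)=\{0\}$. Its kernel consists of those $(A_n)+\cG$ with $(A_n)\in\cJ$ and $W(A_n)=0$; by the faithfulness just noted every such $(A_n)$ lies in $\cG$, so $\Phi$ is injective. An injective $*$-homomorphism of $C^*$-algebras is isometric and maps $\cA/\cG$ isomorphically onto a unital $C^*$-subalgebra of the direct sum; by inverse-closedness of that subalgebra, $(A_n)+\cG$ is invertible in $\cA/\cG$ if and only if $\Phi\big((A_n)+\cG\big)$ is invertible in the direct sum. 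Since invertibility in a direct sum is componentwise, the latter is exactly the simultaneous invertibility of $A$ and of $(A_n)+\cJ$, which together with the reduction of the first paragraph proves the claim.

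The verifications that $W$ is a contractive $*$-homomorphism and that it maps $\cJ$ onto the compacts are routine. I expect the real weight to rest on the faithfulness of $W$ on $\cJ$ modulo $\cG$, which hinges on the norm convergence $\|P_nKP_n-K\|\to0$ for compact $K$ (so that a null coset forces $\|K\|=\lim\|P_nKP_n\|=0$). It is precisely this faithful lifting of the compact part that decouples the single invertibility problem in $\cA/\cG$ into the two independent conditions; once it is available, the remaining steps are the standard facts that injective $*$-homomorphisms reflect invertibility and that invertibility in $\mathcal{B}(L^2(\Gamma))\oplus(\cA/\cJ)$ is tested componentwise.
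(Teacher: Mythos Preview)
The paper does not give its own proof of this theorem: it is quoted as a known ``lifting theorem'' with references to \cite{RSS:2011, DS:2008, Prossdorf1991}, and the text immediately continues with ``This result can be used to study \ldots''. Your argument is a correct and complete reconstruction of the standard proof found in those references: one identifies stability with invertibility in $\cA/\cG$ via inverse-closedness of $C^*$-subalgebras, builds the unital $*$-homomorphism $\Phi:\cA/\cG\to\mathcal{B}(L^2(\Gamma))\oplus(\cA/\cJ)$ from the strong-limit map $W$ and the canonical quotient, checks injectivity by the observation that $W$ is faithful on $\cJ$ modulo $\cG$ (because $P_nKP_n\to K$ forces $K=0$ whenever the strong limit vanishes), and then invokes spectral permanence for injective $*$-homomorphisms to read off the two conditions componentwise. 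All the technical checks you flag (multiplicativity of $W$, $W((A_n)^*)=A^*$, unitality of $\Phi$) go through as you indicate, so there is nothing to correct or add.
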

This result can be used to study the applicability of the Nystr\"om
method to the double layer potential equations. Thus it follows from
\eqref{eqn11} that the sequence of approximation operators
$(A_n)_{n\in\sN}$ corresponding to the Nystr\"om method converges
strongly to the operator $A_\Gamma$. Similar statement is valid for
the sequence of adjoint operators. Let us show the invertibility of
the coset $(A_n)+\cJ$ in the quotient algebra $\cT/\cJ$. It can be
done by using local principles. Thus with each point $\tau\in
\Gamma$ of the contour $\Gamma$ one can associate a simpler sequence
of approximation operators $(A_n^\tau)$, and the invertibility of
the coset $(A_n)+\cJ$ in $\cT/\cJ$ is equivalent to the
invertibility of cosets containing $(A_n^\tau)$ in some algebra
associated with the point $\tau$. For more detail we refer the
reader to \cite{RSS:2011, DS:2008}. Note that if $\tau\notin
\cM_\Gamma$ and $U_\tau\subset \Gamma$ is a neighbourhood of $\tau$
such that $\cM_\Gamma\cap U_\tau=\emptyset$ and if $f_\tau$ is a
function continuous on $\Gamma$ and such that
 $$
f_\tau(t)=\left \{
\begin{array}{cl}
  1 & \quad \text{if } t=\tau \\
  0 & \quad \text{if } t\in \Gamma/U_\tau ,\\
\end{array}
 \right .
 $$
then the operator $f_\tau V_\Gamma f_\tau \in \cK (L^2(\Gamma))$
\cite[Corollary 4.6.3]{RSS:2011}). Therefore, the sequence
$(A_n^\tau)$ is locally equivalent to the sequence generated by the
projections $(P_n)$, so that the corresponding coset containing the
sequence $(A_n^\tau)$ is invertible. Thus one only has to identify
and study the cosets associated with corner points of $\Gamma$. To
this end, for each corner points $\tau_j\in\Gamma$ we consider the
corresponding approximation method for the operator $A_{\omega_j}$
of \eqref{eqn7} and approximate the integral $\int_{\Gamma_j}x(\tau)
d\tau$ by a quadrature rule similar to \eqref{eqn5}, viz.,
\begin{equation}\label{eqn14}
\begin{aligned}
 \int_{\Gamma_j}x(\tau )d\tau \approx & \suml_{l=-\infty}^{-1}\suml_{p=0}^{d-1} w_p x\left (
 \frac{l+\ve_p}{n} e^{i(\beta_j +\omega_j)}\right )\frac{e^{i(\beta_j
 +\omega_j)}}{n}\\
 & + \suml_{l=0}^{+\infty}\suml_{p=0}^{d-1}w_p x\left (
 \frac{l+\ve_p}{n} e^{i\beta_j}\right )\frac{e^{i\beta_j}}{n}
 \end{aligned}
\end{equation}
where $w_p$ and $\ve_p$ as in \eqref{eqn5}. We also need spline
spaces on the contours $\Gamma_j$ and $\sR ^+$. Let $S_n^{\beta_j,
\omega_j}$ be the smallest subspace of $L^2(\Gamma _j)$ which
contains all functions
\begin{equation*}
\widetilde {\vp}_{kn}(t) =
\begin{cases}
\begin{cases} \psi_{kn} (s) &\text{  if  } t=se^{i\beta_j} \\
0 &\text{  otherwise  }
\end{cases} , \quad k \leq 0, \vspace{0.5truecm}\\
  \begin{cases} \psi_{k-d,n}(s) &\text{  if  } t=se^{i(\beta_j + \omega
_j)}\\0 &\text{  otherwise  },
\end{cases} \quad k<0,
\end{cases}
\end{equation*}
where the basis splines $\psi_{kn}$ are defined by
 $$
\psi_{kn}(s):=\psi(ns-k), \quad s\in\sR,
 $$
and where the function $\psi:=u^d$ is obtained by recurrent
relations
 \begin{align*}
u^l(s)&=\int_\sR \chi_{[0,1)}(s-x) u^{l-1}(x)\,dx,\quad l=1,2,\ldots, d, \\
 u^0(x)&=\chi_{[0,1)}(x)= \left \{
 \begin{array}{cl}
   1 & \quad\text{if}\quad x \in [0,1)\\
   0 & \quad\text{otherwise} .\\
 \end{array}
  \right .
 \end{align*}
The spline space $S_n(\sR ^+)=S_n^d(\sR ^+)$ is constructed
similarly but we let $\beta_j =0$ and only take $\widetilde
{\vp}_{kn}$ for $k \geq 0$. Moreover, let $\widetilde{P}_n$ and
$\widehat {P}_n$ denote the orthogonal projections from
$L^2(\Gamma_j)$ onto $S_n^{\beta_j, \omega_j}$ and from $L^2(\sR
^+)$ onto $S_n(\sR ^+)$, respectively. Let $\bR_2(\Gamma_j)$ denote
the set of functions on $\Gamma_j$ which are Riemann integrable on
each finite part of $\Gamma_j$ and satisfy the condition
\begin{equation*}
\begin{aligned}
\|f\|_{\bR} &= \|f\|_{L^2(\Gamma_j)} + \left ( \suml
_{k=0}^{+\infty} \sup \limits_{t\in e^{i(\beta_j +
\omega_j)}[k,k+1]} |f(t)|^2\right )^{1/2}\\
&+\left ( \suml_{k=0}^{+\infty} \sup \limits_{t\in e^{i\beta_j
}[k,k+1]} |f(t)|^2\right )^{1/2} <+\infty.
\end{aligned}
\end{equation*}
Consider the integral equation
\begin{equation*}
A_{\Gamma_j}x=f,\quad f \in \bR_2(\Gamma_j).
\end{equation*}
As before, replace $x$ by an element $x_n \in
S_n^{\beta_j,\omega_j}$, apply quadrature formula \eqref{eqn14} to
the corresponding integrals and use the interpolation projections
$\widetilde{Q}_n^{\delta}: \bR (\Gamma_j) \mapsto S_n^{\beta_j,
\omega_j}$ defined by
\begin{equation*}
\begin{aligned}
&\widetilde {Q}_n^\delta x(t_{lp})=x(t_{lp}),\quad l\in \sZ, \;
p=0,1,\ldots , d-1; \\
&t_{lp} =
\begin{cases}
  \D \frac{l+\delta_p}{n}e^{i(\beta_j + \omega_j)} & \text{  if
 }l<0,\\[1.5ex]
 \D\frac{l+\delta_p}{n}e^{i\beta_j} & \text{  if  } l \geq 0.
\end{cases}
\end{aligned}
\end{equation*}
As the result, we obtain the following operator equations
\begin{equation}\label{eqn15}
\widetilde {Q}_n^\delta A_{\Gamma_j}^{(\ve,n)} \widetilde {P}_n x_n
= \widetilde {Q}_n^\delta f, \quad x_n \in S_n^{\beta_j,\omega
_j},\; n\in \sN.
\end{equation}
These equations are equivalent to the infinite systems of linear
algebraic equations
\begin{equation*}
\begin{aligned}
x_n(t_{kr}) &+ \frac{1}{2\pi i}\suml_{l=-\infty}^{-1}\suml
_{p=0}^{d-1} w_p x_n(t_{lp}) \left ( \frac{\tau '_{lp}}{\tau
_{lp}-t_{kr}} - \frac{\overline {\tau '_{lp}}}{\overline {\tau
_{lp}}-\overline {t_{kr}}}\right )\frac{e^{i(\beta_j + \omega
_j)}}{n}\\
&+\frac{1}{2\pi i}\suml_{l=0}^{\infty}\suml_{p=0}^{d-1} w_p
x_n(t_{lp}) \left ( \frac{\tau '_{lp}}{\tau_{lp}-t_{kr}} -
\frac{\overline {\tau '_{lp}}}{\overline {\tau_{lp}}-\overline
{t_{kr}}}\right )\frac{e^{i\beta_j}}{n}\\
&= f(t_{kr}), \quad k \in \sZ,\; p=0,1,\ldots , d-1
\end{aligned}
\end{equation*}
where $\tau_{lp}$ are defined analogously to $t_{lp}$ but the
parameter $\delta_p$ is replaced by $\ve_p$ and $\tau '_{lp} =
\gamma '(l+\ve_p)/n.$

If one  now uses the integral representation \eqref{eqn6} of the
Mellin convolution operator $\cM(\mathbf{n}_{\omega_j})$ with the
symbol $\mathbf{n}_{\omega_j}$ defined by \eqref{eqn8}, one can
write the operator \eqref{eqn15} in a different form. More
precisely, let $\widehat {Q}_n^\delta, n\in \sN$ be the
interpolation operators defined on the positive semi-axis.
 \begin{lemma}\label{l1}
If ${\bf k}_{\omega}$ is the function defined in \eqref{eqn9}, then
the sequence $(\widetilde {Q}_n^\delta A_{\Gamma_j}^{(\ve,n)}
\widetilde {P}_n)_{n\in \sN}$ is stable if and only if the sequence
$(\widehat{A}_{\omega_j}^{\ve ,\delta, n}
\diag(\widehat{P}_n,\widehat{P}_n))$,
$$ \widehat{A}_{\omega_j}^{\ve ,\delta, n} \diag(\widehat{P}_n,\widehat{P}_n) =
\begin{pmatrix}
\widehat {P}_n & \widehat {Q}_n^\delta \cM^{(\ve,n)}({\bf k}_{\omega_j})\widehat{P}_n \\
\widehat {Q}_n^{1-\delta} \cM^{(1-\ve,n)}({\bf k}_{\omega
_j})\widehat{P}_n & \widehat{P}_n
\end{pmatrix}$$
is so.
\end{lemma}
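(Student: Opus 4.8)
The goal is to show that the stability of the matrix sequence $(\widetilde{Q}_n^\delta A_{\Gamma_j}^{(\ve,n)}\widetilde{P}_n)$ on $\Gamma_j$ is equivalent to the stability of the reduced $2\times 2$ sequence on $\sR^+$ whose off-diagonal entries are discretized Mellin convolution operators built from $\mathbf{k}_{\omega_j}$. The natural strategy is to realize the isometric isomorphism $\eta$ from Section \ref{s2} at the discrete level. Recall that $\eta$ unfolds a function on the two-ray contour $\Gamma_j$ into a pair of functions on $\sR^+$, and conjugating by $\eta$ transforms $A_{\Gamma_j}$ into the matrix operator $A_{\omega_j}$ of \eqref{eqn7}. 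First I would verify that $\eta$ maps the spline space $S_n^{\beta_j,\omega_j}$ isometrically onto $S_n(\sR^+)\times S_n(\sR^+)$, and correspondingly intertwines $\widetilde{P}_n$ with $\diag(\widehat{P}_n,\widehat{P}_n)$ and the interpolation projection $\widetilde{Q}_n^\delta$ with the pair $\diag(\widehat{Q}_n^\delta,\widehat{Q}_n^{1-\delta})$. The appearance of $1-\delta$ and $1-\ve$ in the second row is exactly the effect of the orientation reversal on the incoming ray $\sR^- e^{i(\beta_j+\omega_j)}$: a node at parameter $(l+\ve_p)/n$ with $l<0$ on that ray becomes, after reflecting to $\sR^+$, a node governed by the complementary fraction.

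The core computation is to show that $\eta\, \widetilde{Q}_n^\delta V_{\Gamma_j}^{(\ve,n)}\widetilde{P}_n\,\eta^{-1}$ has the claimed off-diagonal discretized-Mellin form, with the diagonal part contributed by the identity in $A_{\Gamma_j}=I+V_{\Gamma_j}$ producing the $\widehat{P}_n$ on the diagonal. Here I would substitute the two-ray parametrizations into the quadrature system displayed just before the lemma and compare the resulting kernel with the integral representation \eqref{eqn6}. Using \eqref{eqn8} and the fact, recorded above, that $(1/2)(\cN_{\omega_j}-\cN_{2\pi-\omega_j})$ has integral kernel $\mathbf{k}_{\omega_j}$ of \eqref{eqn9}, the discretized double layer operator restricted to each ray should collapse to $\cM^{(\ve,n)}(\mathbf{k}_{\omega_j})$ on one block and to $\cM^{(1-\ve,n)}(\mathbf{k}_{\omega_j})$ on the reflected block. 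The key point is that on a straight ray through the origin the kernel $\tau'_{lp}/(\tau_{lp}-t_{kr})$ depends only on the ratio $\sigma/s$ of the radial variables, which is precisely the homogeneity that makes the quadratures Mellin convolutions; the conjugate term produces the complementary $\cN_{2\pi-\omega_j}$ contribution.

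Once the explicit conjugation identity
$$
\eta\,\bigl(\widetilde{Q}_n^\delta A_{\Gamma_j}^{(\ve,n)}\widetilde{P}_n\bigr)\,\eta^{-1} = \widehat{A}_{\omega_j}^{\ve,\delta,n}\diag(\widehat{P}_n,\widehat{P}_n)
$$
is established, stability of one sequence transfers to the other immediately: $\eta$ is an isometry, so the invertibility of the $n$-th operator and the uniform boundedness of inverses are preserved verbatim under the conjugation. The main obstacle I anticipate is bookkeeping rather than conceptual: correctly tracking how $\eta$ acts on the interpolation nodes $t_{lp}$ and on the quadrature nodes $\tau_{lp}$ across the two rays, so that the reflection sends $\delta\mapsto 1-\delta$ and $\ve\mapsto 1-\ve$ in the second block exactly as written, and confirming that no spurious boundary terms arise at $l=0$ where the two rays meet. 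I would therefore treat the two rays separately, handle the orientation carefully on the incoming ray, and verify the index matching $\widetilde{\vp}_{kn}\mapsto(\psi_{kn},\psi_{k-d,n})$ so that the spline bases align correctly under $\eta$.
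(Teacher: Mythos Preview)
Your plan is correct and follows essentially the same route as the paper: conjugate the sequence by the isometry $\eta$ of Section~\ref{s2}, identify $\eta\widetilde{Q}_n^\delta\eta^{-1}=\diag(\widehat{Q}_n^\delta,\widehat{Q}_n^{1-\delta})$, $\eta\widetilde{P}_n\eta^{-1}=\diag(\widehat{P}_n,\widehat{P}_n)$, and $\eta A_{\Gamma_j}^{(\ve,n)}\eta^{-1}$ with the displayed $2\times2$ matrix, then transfer stability. The paper's proof merely records these three conjugation identities and invokes the factorization, whereas you supply the bookkeeping (orientation reversal producing $1-\delta$, $1-\ve$; homogeneity of the straight-ray kernel yielding a Mellin convolution) that the paper leaves implicit.
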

\begin{proof}
Let $\eta:L^2(\Gamma_j) \mapsto L^2(\sR ^+)^2$ be the isomorphism
defined in Section \ref{s2}. It is easily seen that
$$ \eta \widetilde{Q}_n^\delta \eta^{-1}=\diag (\widehat {Q}_n^\delta,\widehat {Q}_n^{1-\delta}),\; \eta \widetilde {P}_n\eta ^{-1}=\diag(\widehat {P}_n,\widehat {P}_n)$$
and
$$\eta A_{\Gamma_j}^{(\ve,n)}\eta ^{-1} = \eta \widetilde{P}_n\eta ^{-1} + \eta V_{\Gamma_j}^{(\ve,n)}\eta ^{-1}=
\begin{pmatrix}
\widehat{P}_n & \cM^{(\ve,n)}({\bf k}_{\omega_j}) \\
\cM^{(1-\ve, n)}({\bf k}_{\omega_j}) & \widehat{P}_n
\end{pmatrix}.
$$
The obvious identity $ \eta (\widetilde {Q}_n^\delta A_{\Gamma
_j}^{(\ve,n)} \widetilde {P}_n)\eta ^{-1} = (\eta \widetilde {Q}
_n^\delta \eta ^{-1})(\eta  A_{\Gamma_j}^{(\ve,n)}\eta ^{-1})(\eta
\widetilde {P}_n \eta ^{-1})$ completes the proof.
\end{proof}
Let $l_2$ denote the space of sequences $(\xi_j)_{j=0}^{\infty}$ of
complex numbers $\xi_j, j=0,1,\ldots$ such that $\big
(\suml_{j=0}^{\infty} |\xi_j|^2\big )^{1/2}<+\infty$. We now define
the operators $E_n: l_2\mapsto S_n(\sR ^+)$ and $E_{-n}:S_n(\sR
^+)\mapsto l_2$ by
 \begin{align*}
  E_n((\xi_j)_{j=0}^{\infty}) =\suml_{j=0}^{+\infty} \xi_j \widetilde
  {\vp}_{jn}(t),\quad
  E_{-n}\left ( \suml_{j=0}^{+\infty} \xi_j \widetilde {\vp}_{jn}(t)\right )=
(\xi_j)_{j=0}^{\infty}.
\end{align*}
Recall \cite{DeBoor1978} that the operators $E_n: l_2\mapsto S_n(\sR
^+)$ and $E_{-n}:S_n(\sR ^+)\mapsto l_2$ are bounded and there is a
constant $C$ such that
 $$
 ||E_n||||E_{-n}||\leq C \, \text{ for  all }\, n\in\sN.
  $$
The last relation allows us to write the conditions of the stability
of the sequence $(\widetilde {Q}_n^\delta A_{\Gamma_j}^{(\ve,n)}
\widetilde {P}_n)_{n\in \sN}$ in a more convenient form. By
$\widehat{E}_{n}$  and $\widehat{E}_{-n}$ we, respectively, denote
the diagonal operators,
 $$
\widehat{E}_n:=\diag (E_n, E_n), \quad \widehat{E}_{-n}:=\diag
(E_{-n}, E_{-n}).
 $$

\begin{corollary} \label{c1}
The sequence $(\widetilde {Q}_n^\delta A_{\Gamma_j}^{(\ve,n)}
\widetilde {P}_n)_{n\in \sN}$ is stable if and only if the operator
$B_{\omega_j,\delta, \ve}=\widehat{E}_{-1}\widehat{A}_{\omega
_j}^{\ve ,\delta, 1}\diag(\widehat{P}_1,
\widehat{P}_1)\widehat{E}_1$ is invertible.
\end{corollary}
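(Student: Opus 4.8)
The plan is to combine Lemma~\ref{l1} with a homogeneity (scaling) argument. By Lemma~\ref{l1} the sequence $(\widetilde{Q}_n^\delta A_{\Gamma_j}^{(\ve,n)}\widetilde{P}_n)_{n\in\sN}$ is stable if and only if the sequence $(B_n)_{n\in\sN}$, where $B_n := \widehat{A}_{\omega_j}^{\ve,\delta,n}\diag(\widehat{P}_n,\widehat{P}_n)$, is stable, so it suffices to analyse the latter. The key observation is that the model contour, the kernel $\mathbf{k}_{\omega_j}$, the quadrature nodes $(l+\ve_p)/n$, and the spline spaces $S_n(\sR^+)$ are all covariant under dilation. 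I would exploit this to show that the $l_2$-representations $\widehat{E}_{-n}B_n\widehat{E}_n$ are in fact independent of $n$ and coincide with $B_{\omega_j,\delta,\ve}$, so that the stability of $(B_n)$ collapses to the invertibility of a single operator on $l_2\times l_2$.

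Concretely, I would introduce the unitary dilation $Z_n$ on $L^2(\sR^+)$ given by $(Z_nf)(s)=n^{1/2}f(ns)$, which maps $S_1(\sR^+)$ onto $S_n(\sR^+)$ because $\psi_{kn}(s)=\psi_{k1}(ns)$, and put $\widehat{Z}_n:=\diag(Z_n,Z_n)$. The heart of the proof is the system of intertwining relations
\begin{equation*}
Z_n\widehat{P}_1Z_n^{-1}=\widehat{P}_n,\qquad Z_n\widehat{Q}_1^\delta Z_n^{-1}=\widehat{Q}_n^\delta,\qquad Z_n\cM^{(\ve,1)}(\mathbf{k}_{\omega_j})Z_n^{-1}=\cM^{(\ve,n)}(\mathbf{k}_{\omega_j}),
\end{equation*}
together with the analogous identities for $\widehat{Q}_n^{1-\delta}$ and $\cM^{(1-\ve,n)}(\mathbf{k}_{\omega_j})$. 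The first is immediate from $Z_nS_1(\sR^+)=S_n(\sR^+)$ and the unitarity of $Z_n$; the second follows because $Z_n$ carries the interpolation nodes $l+\delta_p$ to the level-$n$ nodes $(l+\delta_p)/n$. The relation for the discretized Mellin operators is the one genuine computation: writing $\cM^{(\ve,n)}(\mathbf{k}_{\omega_j})$ as the composite Gauss--Legendre quadrature of the integral \eqref{eqn6} with nodes $(l+\ve_p)/n$, the homogeneity of $\mathbf{k}_{\omega_j}$ (the integrand depends on $\sigma$ and $s$ only through the ratio $\sigma/s$ against the scale-invariant measure $ds/s$) makes all scaling factors cancel, so the identity holds with no error term. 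Assembling the three relations blockwise yields $\widehat{Z}_nB_1\widehat{Z}_n^{-1}=B_n$, where $B_1=\widehat{A}_{\omega_j}^{\ve,\delta,1}\diag(\widehat{P}_1,\widehat{P}_1)$.

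Since $Z_n^{-1}E_n=n^{-1/2}E_1$ and $E_{-n}Z_n=n^{1/2}E_{-1}$ on the relevant spline spaces, the scalar factors cancel and $\widehat{E}_{-n}B_n\widehat{E}_n=\widehat{E}_{-1}B_1\widehat{E}_1=B_{\omega_j,\delta,\ve}$ for every $n$. It then remains to transfer stability across the uniformly bounded maps $\widehat{E}_{\pm n}$. Using the identities $\widehat{E}_{-n}\widehat{E}_n=I$ on $l_2\times l_2$ and $\widehat{E}_n\widehat{E}_{-n}=I$ on $S_n(\sR^+)^2$, together with the bound $\|E_n\|\,\|E_{-n}\|\le C$, one checks directly that $B_n$ is invertible on $\im\diag(\widehat{P}_n,\widehat{P}_n)$ with uniformly bounded inverses if and only if $B_{\omega_j,\delta,\ve}$ is invertible on $l_2\times l_2$, via the explicit formulas $B_n^{-1}=\widehat{E}_nB_{\omega_j,\delta,\ve}^{-1}\widehat{E}_{-n}$ and $B_{\omega_j,\delta,\ve}^{-1}=\widehat{E}_{-n}B_n^{-1}\widehat{E}_n$. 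I expect the main obstacle to be establishing the exactness of the Mellin intertwining relation — confirming that the quadrature discretization respects the dilation with no remainder — and the careful bookkeeping of which grids ($\delta$ versus $1-\delta$) and Mellin symbols ($\ve$ versus $1-\ve$) enter each matrix entry, so that the homogeneity argument applies uniformly across all blocks.
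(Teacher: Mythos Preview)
Your proposal is correct and follows essentially the same approach as the paper: both arguments establish that $\widehat{E}_{-n}\widehat{A}_{\omega_j}^{\ve,\delta,n}\diag(\widehat{P}_n,\widehat{P}_n)\widehat{E}_n$ is independent of $n$ by exploiting the homogeneity of the Mellin kernel, and then conclude that stability of a constant sequence is invertibility of a single member. The only cosmetic difference is that the paper verifies $n$-independence by a bare-hands computation of the matrix entries (writing out $(\widehat{Q}_n^\delta\cM^{(\ve,n)}(\mathbf{k}_{\omega_j})x_n)((k+\delta_r)/n)$ and observing that only the ratio $(k+\delta_r)/(l+\ve_p)$ survives), whereas you package the same cancellation via the unitary dilation $Z_n$ and intertwining relations; the mathematical content is identical.
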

\begin{proof} Straightforward calculations show that the entries
of the approximation operator $\widehat{E}_{-n}
\widehat{A}_{\omega_j}^{\ve ,\delta, n}
\diag(\widehat{P}_n,\widehat{P}_n) \widehat{E}_n$ do not depend on
$n$. Indeed, consider for example, the sequence $(E_{-n}\widehat
{Q}_n^\delta \cM^{(\ve,n)}({\bf k}_{\omega_j})\widehat{P}_n E_n)$.
If $x_n\in \im \widehat{P}_n$, then
\begin{equation*}
\begin{aligned}
(\cM^{(\ve,n)}({\bf k}_{\omega_j})x_n)(\sigma)&
=\suml_{l=0}^{+\infty} \suml_{p=0}^{d-1}w_p{\bf k}_{\omega_j}\left (
\frac{\sigma}{\frac{l+\ve_p}{n}}\right )
\frac{1}{\frac{l+\ve_p}{n}}\frac{1}{n}\, x_n\left ( \frac{l+\ve
_p}{n}\right)\\
&=\suml_{p=0}^{d-1}w_p\suml_{l=0}^{\infty}{\bf k}_{\omega_j}\left (
\frac{\sigma}{\frac{l+\ve_p}{n}}\right ) \frac{1}{l+\ve_p}\,  x_n
\left
(\frac{l+\ve_p}{n} \right ),\\
\end{aligned}
\end{equation*}
and application of the interpolation operators
$\widetilde{Q}_n^\delta$ leads to the relation
\begin{equation*}
\begin{aligned}
(\widetilde{Q}_n^\delta \cM^{(\ve,n)}({\bf k}_{\omega_j})x_n )\left
(\frac{k+\delta_r}{n}\right ) &=\suml_{p=0}^{d-1}w_p\suml
_{l=0}^{\infty}{\bf k}_{\omega_j}\left ( \frac{\frac{k+\delta
_r}{n}}{\frac{l+\ve_p}{n}}\right ) \frac{1}{l+\ve_p}\, x_n \left
(\frac{l+\ve_p}{n}
\right )\\
&=\suml_{p=0}^{d-1}w_p\suml_{l=0}^{\infty}{\bf k}_{\omega_j}\left (
\frac{k+\delta_r}{l+\ve_p}\right )  \frac{1}{l+\ve_p}\, x_n \left
(\frac{l+\ve_p}{n} \right ).
\end{aligned}
\end{equation*}
Thus the entries of the operator $\widehat{E}_{-n}
\widehat{A}_{\omega_j}^{\ve ,\delta, n}
\diag(\widehat{P}_n,\widehat{P}_n) \widehat{E}_n$ do not depend on
$n$. Therefore, the sequence in question is constant and one
concludes that it is stable if and only if one of its members, say
$E_{-1} \widehat{A}_{\omega_j}^{\ve ,\delta, 1}\diag(\widehat{P}_1,
\widehat{P}_1) E_1$, is invertible. This completes the proof.
\end{proof}

\begin{theorem} \label{stability}
Let $n=qm, m\in \sN$. Suppose that the operator $A$ is invertible.
The Nystr\"om method for the operator $A: L^2(\Gamma) \mapsto
L^2(\Gamma)$ is stable if and only if all the operators
$B_{\omega_j,\delta, \ve}, j=0,1,\ldots , d-1$ are invertible.
\end{theorem}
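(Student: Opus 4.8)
Since $A$ is invertible by hypothesis, Theorem~\ref{t1} reduces the stability of the Nystr\"om sequence $(A_n)$ to a single task: showing that the coset $(A_n)+\cJ$ is invertible in the quotient algebra $\cA/\cJ$. The plan is to treat this coset by localization over the contour $\Gamma$, and to match the local cosets sitting over the corner points with the operators $B_{\omega_j,\delta,\ve}$ already analysed in Corollary~\ref{c1}.

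First I would invoke the local principle outlined before Lemma~\ref{l1}: with every $\tau\in\Gamma$ one associates a local sequence $(A_n^\tau)$, and the invertibility of $(A_n)+\cJ$ is equivalent to the simultaneous invertibility of all the local cosets generated by the $(A_n^\tau)$. For a smooth point $\tau\notin\cM_\Gamma$ the cut-off $f_\tau V_\Gamma f_\tau$ is compact by \cite[Corollary~4.6.3]{RSS:2011}, so the sequence $(A_n^\tau)$ is locally equivalent to the sequence generated by $(P_n)$; the latter is the identity modulo $\cJ$ and its local coset is therefore invertible. Consequently only the finitely many corner points $\tau_0,\ldots,\tau_{q-1}$ can obstruct the invertibility of $(A_n)+\cJ$.

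The core of the argument is the analysis at a corner $\tau_j$. In a neighbourhood of $\tau_j$ the contour $\Gamma$ agrees with the model contour $\Gamma_j$ up to a twice continuously differentiable perturbation, and the standing assumption $n=qm$ forces each $\tau_j=\gamma(j/q)$ to be a mesh node, so that the discretization of $\Gamma$ near $\tau_j$ is a rescaled copy of the discretization \eqref{eqn14} of $\Gamma_j$. A freezing-of-coefficients argument then identifies the local sequence $(A_n^{\tau_j})$, up to an element of $\cJ$, with the model sequence $(\widetilde{Q}_n^\delta A_{\Gamma_j}^{(\ve,n)}\widetilde{P}_n)_{n\in\sN}$: the remainders arising from replacing $\Gamma$ by $\Gamma_j$ and from the curvature of the two adjacent smooth arcs are of the form $P_nKP_n+G_n$ with $K\in\cK(L^2(\Gamma))$ and $G_n\in\cG$, hence negligible. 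Corollary~\ref{c1} now says that the stability of this model sequence, equivalently the invertibility of its local coset, holds if and only if the single operator $B_{\omega_j,\delta,\ve}$ is invertible.

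Assembling the two cases, the coset $(A_n)+\cJ$ is invertible in $\cA/\cJ$ precisely when every $B_{\omega_j,\delta,\ve}$, $j=0,1,\ldots,q-1$, is invertible. Combined with the invertibility of $A$, Theorem~\ref{t1} then gives the asserted characterization of stability. The step I expect to be the main obstacle is making the corner localization rigorous: one has to verify that, after rescaling, the finite-section behaviour on $\Gamma$ genuinely reproduces the bi-infinite model operator on $\Gamma_j$, to handle the passage from the finite mesh on $\Gamma$ to the two-sided mesh \eqref{eqn14} on $\Gamma_j$, and to confirm that every discarded remainder indeed lands in the ideal $\cJ$.
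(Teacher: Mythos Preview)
Your proposal is correct and follows essentially the same route as the paper: reduce via Theorem~\ref{t1} to invertibility of the coset modulo $\cJ$, localize over $\Gamma$, dispose of smooth points via compactness of $f_\tau V_\Gamma f_\tau$, and at each corner $\tau_j$ identify the local coset with the model sequence handled by Corollary~\ref{c1}. The only ingredient the paper makes explicit that you do not is the passage to the smaller $C^*$-subalgebra $\cC\subset\cA$ generated by $(P_nS_\Gamma P_n)$, $(P_nMP_n)$ and $(P_nfP_n)$, $f\in C(\Gamma)$: this is what supplies the ``nice centre'' needed to run Allan's local principle, and inverse-closedness of $C^*$-subalgebras then transfers invertibility back from $\cC/\cJ$ to $\cA/\cJ$; for the detailed corner computation the paper simply cites the proof of \cite[Theorem~3.4]{DH:2013a}.
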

\begin{proof}
Let $\cC$ denote the smallest closed $C^*$-algebra that contains the
sequences $(P_n S_\Gamma P_n)$, $(P_n M P_n)$ and  $(P_n f P_n)$
where $f\in C(\Gamma)$ and let $\cJ$ be the ideal  defined in
Theorem \ref{t1}. Then $(A^{(\ve,n)}P_n)\in \cC$ and $\cC /\cJ$ is a
$C^*$-subalgebra of $\cA/\cJ$. Therefore, the coset
$(A^{(\ve,n)}P_n)+\cJ$ is invertible in $\cA /\cJ$ if and only if it
is invertible in $\cC /\cJ$. However, the algebra $\cC /\cJ$ has a
nice centre and the invertibility of the coset
$(A^{(\ve,n)}P_n)+\cJ$ in $\cC /\cJ$ can be established by the
Allan's local principle \cite{Allan1968} (see also \cite[Theorem
1.9.5]{DS:2008} for real algebra version of Allan's local
principle). Thus following the proof of Theorem 3.4 of
\cite{DH:2013a} one can show that for any $\tau=\tau_j\in
\cM_\Gamma$ this coset is invertible if and only if the
corresponding operator $B_{\omega_j,\delta, \ve}$ is invertible. On
the other hand, it was already mentioned that for $\tau\notin
\cM_\Gamma$, the corresponding coset is always invertible, and
application of Theorem \ref{t1} completes the proof.
\end{proof}

\section{Numerical approach to the invertibility of local
  operators\label{s4}}

Due to Theorem \ref{stability}, the stability of the Nystr\"om
method depends on the invertibility of the operators
$B_{\omega_j,\delta, \ve}$,  $j=0,1,\ldots, q-1$. A more detailed
study of these operators shows that they belong to an algebra of
Toeplitz operators with matrix symbols. Unfortunately, at present
there is no efficient criterion to check whether such operators are
invertible or not. On the other hand,  when considering the
stability of approximation methods for Sherman--Lauricella and
Muskhelishvili equations, a numerical approach to problems has been
proposed in \cite{DH:2011, DH:2011b}. Thus one can connect the
invertibility of $B_{\omega,\delta, \ve}$ with the stability of the
method for the corresponding initial operator $A$ on model curves,
which have one or more corner points all of the same magnitude
$\omega$. As the next step, one can check the behaviour of the
condition numbers for the method under consideration and decide
which opening angles $\omega$ belong to the set of "critical"
angles, i.e to the set of the angles which cause the instability of
the method. An essential difference to the situation with the
Muskhelishvili and Sherman--Lauricella situation is that now one
does not know whether the initial operator $A$ is invertible.
Therefore the invertibility of $A$ has to be assumed from the very
beginning or verified somehow. More precisely, one can apply Theorem
\ref{stability} in a special setting and get the following result.

  \begin{theorem}\label{t4.1}
Let $\cL=\cL(\omega)$ denote any of two curves $\cL_1(\omega)$ or
$\cL_2(\omega)$, $\omega\in (0,2\pi)$ defined in Section \ref{s3}.
If the corresponding operator $A_{\cL(\omega)}$ of \eqref{eqn12} is
invertible, then the operator $B_{\omega, \delta, \ve}$ is
invertible if and only if the Nystr\"om method $(Q_n^{\delta},
 A_{\cL(\omega)}^{(\ve,n)} P_n)$ is stable.
  \end{theorem}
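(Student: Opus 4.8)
The plan is to specialize the general stability result, Theorem \ref{stability}, to the two model curves $\cL_1(\omega)$ and $\cL_2(\omega)$. By the construction in Section \ref{s3}, the curve $\cL_1(\omega)$ has exactly one corner point, while $\cL_2(\omega)$ has exactly two corner points; in either case \emph{every} corner point carries the same opening angle $\omega$. Writing $q$ for the number of corner points (so $q=1$ for $\cL_1$ and $q=2$ for $\cL_2$), the operator $A=A_{\cL(\omega)}$ of \eqref{eqn12} is precisely the operator to which Theorem \ref{stability} applies, and the hypothesis of the present statement supplies exactly the invertibility of $A$ that the theorem requires.

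With these identifications, Theorem \ref{stability} asserts that the Nystr\"om method $(Q_n^\delta A_{\cL(\omega)}^{(\ve,n)}P_n)$ is stable if and only if the local operators $B_{\omega_j,\delta,\ve}$ attached to the corner points $\tau_j$, $j=0,\ldots,q-1$, are all invertible. The key observation that collapses this family to a single operator is that $B_{\omega_j,\delta,\ve}$ depends on the $j$-th corner only through the opening angle $\omega_j$. Indeed, tracing back through Corollary \ref{c1} and Lemma \ref{l1}, the only place where the geometry of the corner enters the matrix $\widehat{A}_{\omega_j}^{\ve,\delta,1}$ is via the Mellin kernel $\mathbf{k}_{\omega_j}$ of \eqref{eqn9}, and that kernel is a function of $\omega_j$ alone, the rotation parameter $\beta_j$ having been absorbed by the isometry $\eta$ used in the proof of Lemma \ref{l1}. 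Hence all corner points sharing the common angle $\omega$ produce one and the same operator $B_{\omega,\delta,\ve}$.

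Consequently, the condition ``all $B_{\omega_j,\delta,\ve}$ are invertible'' reduces, for each of $\cL_1(\omega)$ and $\cL_2(\omega)$, to the single requirement that $B_{\omega,\delta,\ve}$ be invertible, which is the claimed equivalence. The only technical point to verify is the divisibility constraint $n=qm$ appearing in Theorem \ref{stability}: for $\cL_1(\omega)$ one has $q=1$, so it holds for every $n$, while for $\cL_2(\omega)$ one has $q=2$, so it holds along the subsequence of even $n$ (which suffices, since stability is a property of the sequence and the local operator is the same for both corners). I expect no genuine obstacle here, as the entire argument is a direct specialization of Theorem \ref{stability}; the main thing to be careful about is confirming cleanly that the map $\omega_j\mapsto B_{\omega_j,\delta,\ve}$ factors through the opening angle, i.e.\ that no residual dependence on $\beta_j$ or on the global geometry of $\cL(\omega)$ survives in the localized operator $B_{\omega,\delta,\ve}$.
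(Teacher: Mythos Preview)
Your proposal is correct and follows exactly the route the paper indicates: the paper gives no separate proof of Theorem~\ref{t4.1} but simply states that it is obtained by applying Theorem~\ref{stability} ``in a special setting,'' which is precisely the specialization to $\cL_1(\omega)$ and $\cL_2(\omega)$ that you carry out. Your additional care in verifying that $B_{\omega_j,\delta,\ve}$ depends only on the opening angle (so that all corners of $\cL(\omega)$ yield the same local operator) and in noting the divisibility condition $n=qm$ is appropriate and goes slightly beyond what the paper makes explicit.
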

  \begin{figure}[!tb] \centering
\includegraphics[height=45mm,width=60mm]{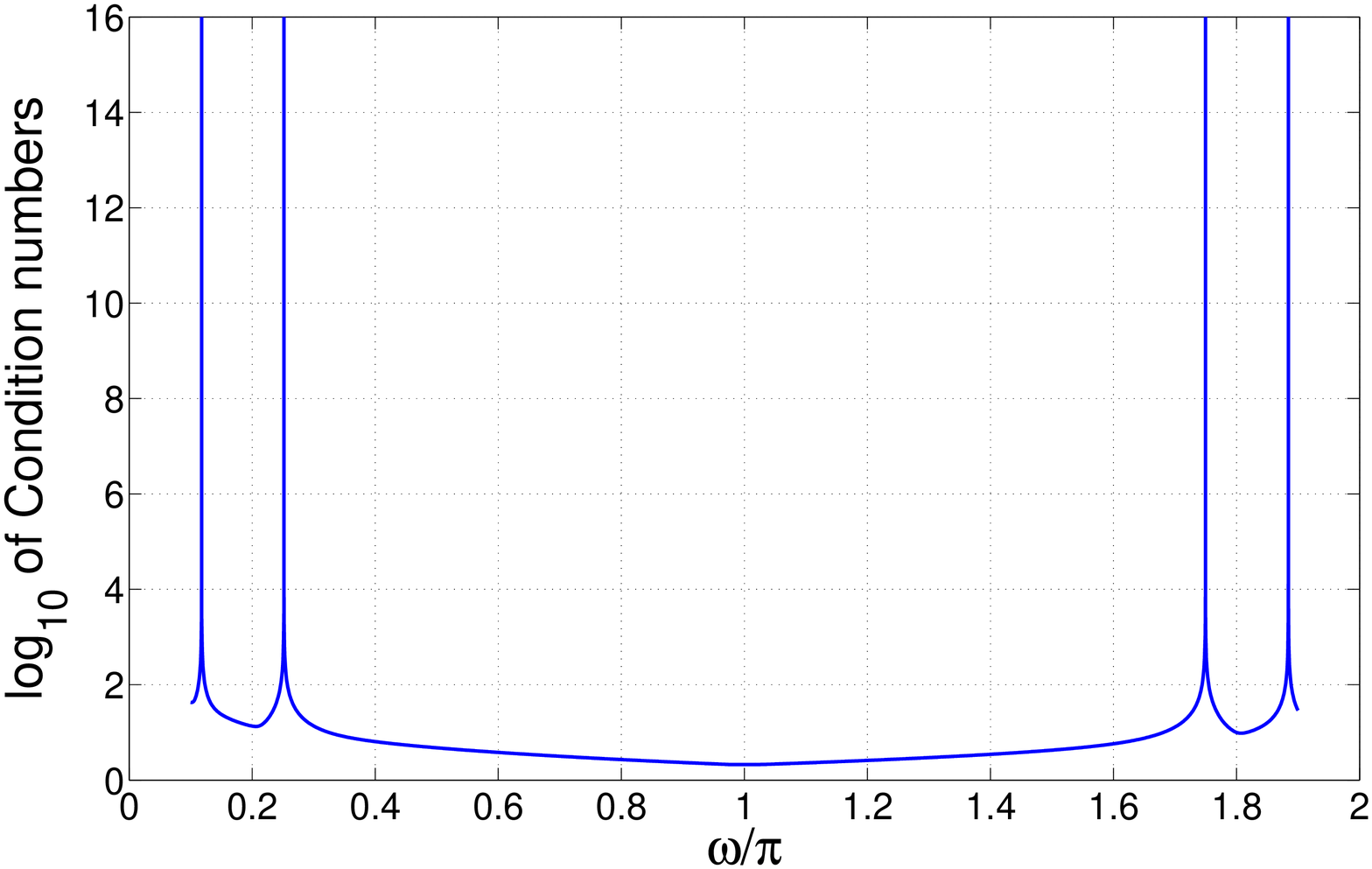}~%
 \includegraphics[height=45mm,width=60mm]{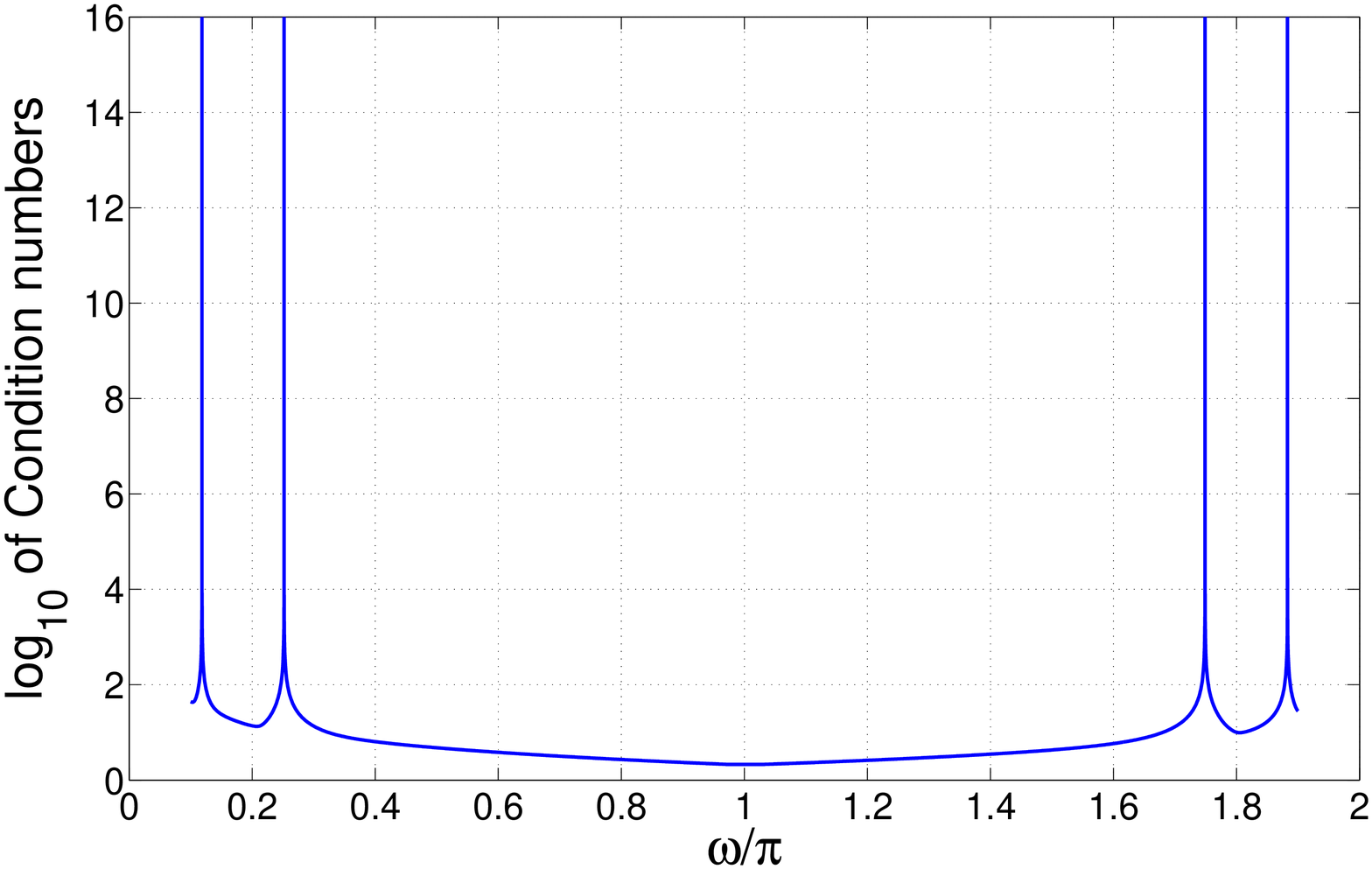}
   \caption{\sf  Condition numbers vs. opening angles
   in case $n=128,\,d=16$. Left: for one-corner curve, right: for two-corner curve}
   \label{fig:128}
\end{figure}
It is worth noting that the curves $\cL_1$ and $\cL_2$ have distinct
shapes and the number of corner points. However, if their corner
points have the same opening angle, the corresponding numerical
experiments shall produce the same results. In what follows we are
varying  parameter $\omega$ in the interval $(0.1\pi,1.9\pi)$ and
obtain two families of contours with one and two corner points,
respectively. In order to find the instability angles, we divide the
interval $[0.1\pi, 1.9\pi]$ by the points $\omega_k =
\pi*(0.1+0.001k)$. Further, for each point $\omega_k$ we compute the
condition numbers for the Nystr\"om method in the case where $n=128$
and the Gauss--Legendre quadrature with $d=16$ is used. Recall that
both sets of parameters $\ve_p$ and $\delta_p$ in \eqref{eqn13} are
the Gauss-Legendre points on the interval $[0,1]$. Calculating the
corresponding condition numbers at the points $\omega_k$, we
detected "suspicious" points in the neighbourhoods of which
condition numbers grow rapidly. Thereafter, in neighborhoods of such
points the initial mesh has been refined and condition numbers are
recalculated. The procedure is repeated until condition numbers
reach the point $10^{16}$. The outcome of these computations is
presented in Figure \ref{fig:128}. Thus using both contours we found
that the corresponding graphs have four peaks in the interval
$(0.1,1.9)$, and approximate value for the critical "angles are:

\vspace{2mm}

\centerline{\textbf{The case of one corner geometry, curve $\cL_1$}}
\begin{displaymath}
0.11781222 \pi,\quad 0.25164815 \pi,\quad 1.74949877 \pi, \quad
1.88430019 \pi
\end{displaymath}

\vspace{2mm}

\centerline{\textbf{The case of two corner geometry, curve $\cL_2$}}
\begin{displaymath}
0.11780844 \pi ,\quad 0.25164706 \pi,\quad 1.74840993 \pi, \quad
1.88390254 \pi.
\end{displaymath}

Let us emphasize that for both the curve $\cL_1$ and $\cL_2$, the
results obtained coincide up to three significant numbers. The peaks
obtained are connected with four possible critical angles in the
interval $(0.1\pi,1.9\pi)$. On the other hand, it is possible that
they arose as the result of irreversibility of the corresponding
operator $A$ on the curve $\cL_j$. However, numerical experiments
with other approximation methods for the same operators, which are
not reported here, show that those methods have distinct critical
angles. But it is not possible if the operator in question is
irreversible for the angles mentioned. Thus the values found
represent the "critical" angles of the method but not the curves
where the initial operator $A$ is not invertible.

 Note that the numerical experiments are performed in MATLAB
environment (version 7.9.0) and executed on an Acer Veriton M680
workstation equipped with a Intel Core i7 vPro 870 processor and 8GB
of RAM.

\section{Conclusion}

In this work, necessary and sufficient conditions of the stability
of the Nystr\"om method for double layer potential equations on
simple piecewise smooth contours are established. Moreover, we found
four angles in the interval $(0.1\pi, 1.9\pi)$ whose presence on the
contour $\Gamma$ will cause the instability of the method, does not
matter what the shape the curve $\Gamma$ has. Thus if the contour
$\Gamma$ possesses at least one of such angles, the Nystr\"om method
is not stable and in order to find an approximate solution of the
corresponding double layer potential equation, one has to use a
different approximation method.

The results of numerical experiments are verified by using curves
with different numbers of corner points and they are in a good
correlation with theoretical studies.


 \end{document}